\newtheorem{theorem}{Theorem}[section]
\newtheorem{lemma}[theorem]{Lemma}
\newtheorem{proposition}[theorem]{Proposition}
\newtheorem{definition-proposition}[theorem]{Definition-Proposition}
\theoremstyle{definition}
\newtheorem{definition}[theorem]{Definition}
\newtheorem{example}[theorem]{Example}
\theoremstyle{remark}
\newtheorem{remark}[theorem]{Remark}
\numberwithin{equation}{section}
\newcommand{\QQ}{\mathcal Q}
\newcommand{\PP}{\mathcal P}
\newcommand{\XX}{\mathcal X}
\newcommand{\cO}{\mathcal{O}}
\newcommand{\qn}{\textbf{q}}
\newcommand{\bk}{\overrightarrow{\textbf{k}}}
\newcommand{\bL}{\overrightarrow{\textbf{L}}}
\newcommand{\bm}{\overrightarrow{\textbf{m}}}
\newcommand{\br}{\overrightarrow{\textbf{r}}}
\newcommand{\bQ}{\overrightarrow{\textbf{Q}}}
\newcommand{\bI}{\overrightarrow{\textbf{I}}}
\newcommand{\bJ}{\overrightarrow{\textbf{J}}}
\newcommand{\bb}{\overrightarrow{\beta}}
\newcommand{\bS}{\overrightarrow{\textbf{S}}}
\newcommand{\Zn}{\mathbb Z^n_2}
\begin{document}

\title{On the Construction of $\Zn-$Supergrassmannians as Homogeneous $\Zn-$Superspaces\footnote{2010 Mathematics Subject Classification. Primary 58A50; Secondary 20N99. }}

\author[Mohammadi M.]{Mohammad Mohammadi}
\address{Department of Mathematics, Institute for Advanced Studies in Basic Sciences (IASBS), No. 444, Prof. Yousef Sobouti Blvd.
P. O. Box 45195-1159 Zanjan Iran, Postal Code 45137-66731}
\curraddr{}
\email{moh.mohamady@iasbs.ac.ir}
\thanks{}

\author[Varsaie S.]{Saad Varsaie}
\address{Department of Mathematics, Institute for Advanced Studies in Basic Sciences (IASBS), No. 444, Prof. Yousef Sobouti Blvd.
P. O. Box 45195-1159 Zanjan Iran, Postal Code 45137-66731}
\curraddr{}
\email{varsaie@iasbs.ac.ir}
\thanks{}

\date{}

\begin{abstract}
In this paper, we construct the $\Zn-$supergrassmannians by gluing of the $\Zn-$superdomains and give an explicit description of the action of the $\Zn-$super Lie group $GL(\bm)$ on the $\Zn-$supergrassmannian $G_{\bk}(\bm)$ in the functor of points language. In particular, we give a concrete proof of the transitively of this action, and the gluing of the local charts of the supergrassmannian.\\
\textbf{Keywords:} $\Zn-$Super Lie group, Homogeneous superspace, $\Zn-$ Supermanifold, $\Zn-$ Supergrassmannian 
\end{abstract}

\maketitle

\section*{Introduction}
There is growing interest in studying generalized supergeometry, that is, 
geometry of graded manifolds where the grading group is not $\mathbb Z_2$,
 but $\Zn=\mathbb Z_2\times \ldots \times \mathbb Z_2$. The foundational aspects of the theory of $\Zn-$supermanifolds were recently studied 
in \cite{supergeometry I}, \cite{spliting}, \cite{categoryz2n} and
\cite{Highertrace}. This generalization is used in string theory and parastatistics in physics,
 see \cite{phys1}, \cite{phys2}. Also in Mathematics, there exist many examples of $\Zn-$graded
$\Zn-$commutative algebras: quaternions and Clifford algebras, the algebra 
of Deligne differential superforms, etc. Moreover, there exist interesting  examples of $\Zn-$supermanifolds. In this paper, we study the $\Zn-$supergrassmannians as $\Zn-$supermanifolds and their constructions.

 In the context of supermanifolds, homogeneous superspaces have been defined and investigated extensively using the functor of points approach in \cite{g-spaces}, \cite{qoutiontsuper} and \cite{carmelibook}. In this paper, we show that $\Zn-$supergrassmannians $G_{\bk}(\bm)$ are  homogeneous, c.f. section \ref{supergrass}. To this end, we show that the $\Zn-$super Lie group $GL(\bm)$, c.f. section \ref{prelim}, acts transitively on $\Zn-$supergrassmannian $G_{\bk}(\bm)$ , c.f. section \ref{supergrass}. 

In the first section, we recall briefly all necessary basic concepts such as
$\Zn-$ grading spaces, $\Zn-$supermanifolds, $\Zn-$super Lie groups and an action of a $\Zn-$ super Lie group on a $\Zn-$supermanifold. we use these concepts in the case of $\mathbb Z_2-$supergeometry in \cite{g-spaces} and \cite{carmelibook}.  

\medskip

In section 2, we study the $\Zn-$supergrassmannians extensively. The $\mathbb Z_2-$ supergrassmannians are introduced by Manin in \cite{ma1}, but here by developing an efficient formalism, we fill in the details of the proof of this statement.

\medskip

In section 3, by a functor of points approach, an action of the super Lie group $GL(\bm)$ on the supergrassmannian
$G_{\bk}(\bm)$ is defined by gluing local actions. Finally it is shown that this action is transitive.

\section{Preliminaries} \label{prelim}
Let $\Zn=\mathbb Z_2\times \ldots \times \mathbb Z_2$ be the $n-$fold Cartesian product of $\mathbb Z_2.$ From now on, we set $\qn:=2^n-1$ and by $\bk$ and $\bm,$ we mean $(k_0,k_1,\ldots,k_\qn)$ and $(m_0,m_1,\dots,m_\qn)$ respectively such that $k_i,m_j\in \mathbb N$. Consider the bi-additive map 
\begin{align}\label{commutfactor}
\langle \ldotp, \ldotp \rangle & : \Zn \times \Zn \rightarrow \mathbb Z_2\nonumber\\
\langle a, b\rangle & =\sum_{i=1}^{n}a_ib_i (mod 2).\end{align} 
The even subgroup $(\Zn)_0$ consists of elements $\gamma \in \Zn$ such that $\langle \gamma, \gamma\rangle=0,$ and the set $(\Zn)_1$ consists of odd elements $\gamma \in \Zn$ such that $\langle \gamma, \gamma\rangle=1.$


One can fix an ordering on $\Zn$; based on this ordering, each even element is smaller than each odd element. Given two even (odd) elements $(a_1,a_2,\ldots,a_n)$ and $(b_1,b_2,\ldots,b_n)$, the first one is smaller than the second one for the lexicographical 
order, if $a_i<b_i$,  for the first i where $a_i$ and $b_i$ differ. For example, the lexicographical ordering on $\mathbb Z_2^3$ is
$$(0,0,0) < (0,1,1) < (1,0,1) < (1,1,0) < (0,0,1) < (0,1,0) < (1,0,0) < (1,1,1)$$
Obviously, $\Zn$ with lexicographical ordering is totally
ordered set. Thus it may be diagrammed as an ascending chain as follows $$\gamma_0 < \gamma_1 < \ldots < \gamma_{\qn}.$$

In the supergeometry, the sign rules between generators of the algebra are completely determined by their parity. One can define a grading by (\ref{commutfactor}) such that
$\epsilon(a,b)=(-1)^{\langle a,b\rangle}$ will be a sign rule what will lead to $\Zn-$supergeometry. Also, it has been shown that any other sign rule for finite number of coordinates is obtained from the above sine rule for sufficiently big $n.$ See \cite{supergeometry I} for more details.
\subsection{$\Zn-$supergeometry}
The $\Zn-$graded objects like $\Zn-$superlagebras, $\Zn-$super ringed spaces, $\Zn-$ superdomains and $\Zn-$supermanifolds have been studied in \cite{supergeometry I}, \cite{categoryz2n} and \cite{Highertrace}. In the following, we recall the necessary definitions from these references. 

By definition, a $\Zn-$super vector space is a direct sum $V=\bigoplus_{\gamma \in \Zn}V_{\gamma}$ of vector spaces $V_{\gamma}$ over a field $\mathbb K$ (with characteristic $0$). For each $\gamma\in \Zn,$ the elements of $V_{\gamma}$ is called homogeneous with degree $\gamma.$ If $x\in V_{\gamma}$
be a homogeneous element of $V,$ then the degree of $x$
is represented by $\tilde x=\gamma.$

A $\Zn-$superring $\mathcal R=\bigoplus_{\gamma \in \Zn}\mathcal R_{\gamma}$ is a ring such that its multiplication satisfy $\mathcal R_{\gamma_1}\mathcal R_{\gamma_2}\subset \mathcal R_{\gamma_1+\gamma_2}.$
A $\Zn-$superring $\mathcal R$ is called $\Zn-$commutative, if for any homogeneous elements $a,b\in \mathcal R$
$$a.b=(-1)^{\langle\tilde a, \tilde b\rangle}b.a$$
\begin{example}
Let $R$ be a ring and $\xi_1, \ldots, \xi_{\qn}$ be indeterminates with degree $\gamma_1, \ldots, \gamma_{\qn}\in \Zn$ respectively such that 
$$\xi_i\xi_j=(-1)^{\langle\gamma_i, \gamma_j\rangle}\xi_j\xi_i.$$
Then $R[[\xi_1, \ldots, \xi_{\qn}]]$ is the $\Zn-$commutative
associative unital $R-$algebra of formal series in the $\xi_a$ with coefficients in $R.$
\end{example}
By a $\Zn-$super ringed space, we mean a pair $(X, \cO_X)$ where $X$ is a topological space and $\mathcal O_X$ is a sheaf of $\Zn-$commutative $\Zn-$graded rings on $X$. A morphism between 
two $\Zn-$super ringed spaces 
$(X, \cO_X)$ and
$(Y, \cO_Y)$ is a pair $\psi:=(\overline{\psi},\psi^*)$ such that $\overline{\psi}:X\rightarrow Y$ is a continuous map and $\psi^*:\mathcal O_Y\rightarrow \overline{\psi}_*\mathcal O_X$ is a
homomorphism of weight zero between the sheaves of $\Zn-$commutative $\Zn-$graded rings.
Let $\qn=2^n-1,$ the $\Zn$-ringed space $$\mathbb R^{\bm}:=\Big(\mathbb R^{m_0},C^\infty_{\mathbb R^{m_0}}(-)[[\xi_1^1,\ldots,\xi_1^{m_1},\xi_2^1,\ldots,\xi_2^{m_2},\ldots,\xi_3^1,\ldots,\xi_\qn^{m_\qn}]]\Big)$$ is called $\Zn$-superdomain such that $C^\infty_{\mathbb R^{m_0}}$ is the sheaf of smooth functions on $\mathbb R^{m_0}$. Also for each open $U\subset \mathbb R^{m_0}$, 
$$C^\infty_{\mathbb R^{m_0}}(U)[[\xi_1^1,\ldots,\xi_1^{m_1},\xi_2^1,\ldots,\xi_2^{m_2},\ldots,\xi_3^1,\ldots,\xi_\qn^{m_\qn}]],$$
is the $\Zn-$commutative associative unital $\Zn-$superalgebra of formal power series
in formal variables $\xi_i^j$'s
of degrees $\gamma_i$ which commuting as follows:
$$\xi_i^j\xi_k^l=(-1)^{\langle\gamma_i,\gamma_k\rangle}\xi_k^l\xi_i^j.$$
By evaluation of $f=\sum_{}f_I\xi^I$ at $x \in U$, denoted by $ev_x(f)$, we mean $f_\phi(x)$.

A $\Zn$-supermanifold of dimension $\bm$ is a $\Zn$-ringed space $(\overline {M},\cO_M)$ that is locally isomorphic to $\mathbb R^{\bm}$. In addition $\overline {M}$ is a second countable and Hausdorff topological space. A morphism between two $\Zn$-supermanifolds $M=(\overline {M},\cO_M)$ and $N=(\overline{N},\cO_N)$ is a local morphism between two local $\Zn$-ringed spaces.

Analogous with supergeometry, one can obtain a $\Zn-$supermanifold by gluing $\Zn-$superdomains. We will use this method to construct the $\Zn-$ supergrassmannian as a $\Zn-$supermanifold in section \ref{supergrass}.
\subsection{Category theory}
By a locally small category, we mean a category such that the collection of all morphisms between any two of its objects is a set. Let $X$, $Y$ are objects in a category and $\alpha,\beta:X\rightarrow Y$ are morphisms between these objects. An universal pair $(E,\epsilon)$ is called \textit{equalizer} if the following diagram commutes:
$$E\xrightarrow{\epsilon} X\overset{\alpha}{\underset{\beta}{\rightrightarrows}}Y,$$
i.e., $\alpha \circ \epsilon=\beta \circ \epsilon$ and also for each object $T$ and any morphism $\tau:T\rightarrow X$ which satisfy $\alpha \circ \tau=\beta \circ \tau$, there exists unique morphism $\sigma:T\rightarrow E$ such that $\epsilon \circ \sigma = \tau$. If equalizer existed then it is unique up to isomorphism. For example, in the category of sets, which is denoted by $\textbf{SET}$, the equalizer of two morphisms $\alpha,\beta:X\rightarrow Y$ is the set $E=\{x\in X | \alpha(x)=\beta(x)\}$ together with the inclusion map  $\epsilon:E\hookrightarrow X.$

\medskip

Let $\mathcal{C}$ be a locally small category, and $X$ be an object in $\mathcal{C}$. By $T$-points of $X$, we mean $X(T):=Hom_\mathcal{C}(T,X)$ for any $T\in Obj(\mathcal{C})$. The functor of points of $X$ is a functor which is denoted by $X(.)$ and is defined as follows:

$$\begin{matrix} X(.):\mathcal{C} \rightarrow \textbf{SET}\\
\qquad\quad S \mapsto X(S),\\
\\
X(.): Hom_{\mathcal{C}}(S,T)\rightarrow Hom_{\textbf{SET}}(X(T),X(S))\\
\varphi \mapsto X(\varphi),\end{matrix}$$\\
where $X(\varphi):f\mapsto f\circ \varphi.$ A functor $F:\mathcal{C} \to \textbf{SET}$ is called representable if there exists an object $X$ in $\mathcal{C}$ such that $F$ and $X(.)$ are isomorphic. Then one may say that $F$ is represented by $X$. 
The category of functors from $\mathcal{C}$ to $\textbf{SET}$ is denoted by $[\mathcal{C}, \textbf{SET}]$. It is shown that the category of all representable functors from $\mathcal{C}$ to $\textbf{SET}$ is a subcategory of $[\mathcal{C}, \textbf{SET}]$.

\medskip

Corresponding to each morphism $\psi:X\rightarrow Y$, there exists a natural transformation
$\psi(.)$ from $X(.)$ to $Y(.)$. This transformation corresponds the mapping 
$\psi(T):X(T)\rightarrow Y(T)$ with $\xi\mapsto \psi\circ \xi$ for each $T\in Obj(\mathcal{C})$. Now set:
$$\begin{matrix} \mathcal{Y}:\mathcal{C}\rightarrow [\mathcal C,\textbf{SET}]\\ X \mapsto X(.)\\ \psi \mapsto \psi(.). \end{matrix}$$
Obviously, $\mathcal{Y}$ is a covariant  functor and it is called \textit{\textbf{Yoneda embedding}}.
\begin{lemma}\label{Yoneda}
The Yoneda embedding is full and faithful functor, i.e. the map $$Hom_\mathcal{C}(X,Y)\longrightarrow Hom_{[\mathcal{C},\textbf{SET}]}(X(.),Y(.)),$$ 
is a bijection for each $X,Y\in Obj(\mathcal{C}).$
\end{lemma}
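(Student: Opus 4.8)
The plan is to prove bijectivity by exhibiting an explicit two-sided inverse, in the spirit of the classical Yoneda lemma. Write $\Phi:Hom_{\mathcal{C}}(X,Y)\to Hom_{[\mathcal{C},\textbf{SET}]}(X(.),Y(.))$ for the map $\psi\mapsto\psi(.)$ already described, whose component at $T$ is $\psi(T):\xi\mapsto\psi\circ\xi$. The central observation is that a natural transformation out of $X(.)$ is completely determined by the image of a single distinguished element, namely the identity $\text{id}_X\in X(X)=Hom_{\mathcal{C}}(X,X)$. Accordingly I would define a candidate inverse $\Psi$ by $\Psi(\eta):=\eta_X(\text{id}_X)$; since $\eta_X:X(X)\to Y(X)$, we have $\Psi(\eta)\in Y(X)=Hom_{\mathcal{C}}(X,Y)$, which is exactly a morphism of the required type.

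First I would verify $\Psi\circ\Phi=\text{id}$, which is immediate: for $\psi:X\to Y$ one computes $\Psi(\Phi(\psi))=\Psi(\psi(.))=\psi(X)(\text{id}_X)=\psi\circ\text{id}_X=\psi$. This already shows that $\Phi$ is injective, which yields faithfulness of $\mathcal{Y}$.

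The substantive step, and the one I expect to be the main obstacle, is showing $\Phi\circ\Psi=\text{id}$, i.e. that every natural transformation $\eta:X(.)\to Y(.)$ arises as $\psi(.)$ for $\psi:=\eta_X(\text{id}_X)$. Here I would exploit naturality directly. Fix an object $T$ and an element $\xi\in X(T)=Hom_{\mathcal{C}}(T,X)$; it suffices to check $\eta_T(\xi)=\psi\circ\xi$. Applying the morphism $\xi:T\to X$ to the naturality constraint for $\eta$ produces the commuting square
$$\begin{array}{ccc}
X(X) & \xrightarrow{\ \eta_X\ } & Y(X)\\
\downarrow & & \downarrow\\
X(T) & \xrightarrow{\ \eta_T\ } & Y(T)
\end{array}$$
whose vertical arrows are $X(\xi)$ and $Y(\xi)$, both given by precomposition with $\xi$. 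Chasing $\text{id}_X$ around the two paths gives, along one side, $\eta_T\big(X(\xi)(\text{id}_X)\big)=\eta_T(\text{id}_X\circ\xi)=\eta_T(\xi)$, and along the other, $Y(\xi)\big(\eta_X(\text{id}_X)\big)=\psi\circ\xi$. Commutativity then forces $\eta_T(\xi)=\psi\circ\xi=\psi(T)(\xi)$, and since $T$ and $\xi$ were arbitrary this gives $\eta=\psi(.)=\Phi(\psi)$, establishing surjectivity and hence fullness.

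The only delicate point is bookkeeping the contravariant variance of $X(.)$ and $Y(.)$ so that the naturality square is oriented correctly and the vertical maps really are $X(\xi)$, $Y(\xi)$ rather than their formal reverses; once the diagram is set up in the right direction, the element chase is forced and the two computations above complete the proof that $\Phi$ and $\Psi$ are mutually inverse.
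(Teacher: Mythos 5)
Your proof is correct: it is the standard Yoneda argument, constructing the inverse $\eta\mapsto\eta_X(\mathrm{id}_X)$ and using naturality against the morphism $\xi:T\to X$ to recover $\eta$ from this single value, with the contravariant variance handled correctly. The paper itself gives no proof, deferring entirely to the cited reference, so there is nothing in-text to compare against; your argument is precisely the canonical one that reference supplies, and it fills the gap completely.
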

\begin{proof}
see \cite{carmelibook}.
\end{proof}
Thus according to this lemma, $X,Y\in Obj(\mathcal{C})$ are isomorphic if and only if their functor of points are isomorphic. The Yoneda embedding is an equivalence between $\mathcal{C}$ and a subcategory of representable functors in $[\mathcal{C},\textbf{SET}]$ since not all functors are representable.

\subsection{$\Zn-$super Lie groups}
Let $\textbf{ZSM}$ be the category of $\Zn-$ supermanifolds. This is a category whose objects are $\Zn-$supermanifolds whose morphisms are morphisms between two $\Zn-$supermanifolds. Obviously, $\textbf{ZSM}$ is a locally small category and has finite product property. In addition it has a terminal object $\mathbb{R}^{\overrightarrow{\textbf{0}}}$, that is the constant sheaf $\mathbb{R}$ on a singleton $\{0\}$.

\medskip

Let $M=(\overline {M},\cO_M)$ be a $\Zn-$supermanifold and $p\in \overline {M}$. There is a map $j_p=(\overline{j}_p,{j_p}^*)$ where:
$$\begin{matrix} \overline{j}_p:\{0\} \rightarrow \overline {M},\qquad\qquad 
& {j_p}^*:\mathcal{O}_M\rightarrow \mathbb{R} \\
& \qquad\qquad\qquad\quad g \mapsto \tilde g(p)=:ev_p(g).\end{matrix}$$
So, for each $\Zn-$supermanifold $T$, one can define the morphism
\begin{align}\hat{p}_{_{_T}}:T\rightarrow \mathbb{R}^{\overrightarrow{\textbf{0}}}\xrightarrow{j_p}M\label{phat},\end{align}
as a composition of $j_p$ and the unique morphism $T\rightarrow \mathbb{R}^{\overrightarrow{\textbf{0}}}$.

\medskip

By $\Zn-$super Lie group, we mean a group-object in the category $\textbf{ZSM}$. More precisely, it is defined as follows:
\begin{definition}
A $\Zn-$super Lie group $G$ is a $\Zn-$supermanifold $G$ together with the morphisms of weight zero $\mu:G\times G \rightarrow G,\quad i:G \rightarrow G,\quad e:\mathbb{R}^{\overrightarrow{\textbf{0}}} \rightarrow G$ called multiplication, inverse and unit morphisms respectively, such that the following equations are satisfied
\begin{align*}
\mu \circ (\mu\times 1_G)&=\mu \circ (1_G\times \mu)\\
\mu \circ (1_G\times \hat{e}_G)\circ \bigtriangleup_G&=1_G=\mu \circ (\hat{e}_G\times 1_G)\circ \bigtriangleup_G\\
\mu \circ (1_G\times i)\circ \bigtriangleup_G&=\hat{e}_G=\mu \circ (i\times 1_G)\circ \bigtriangleup_G
\end{align*}
where $1_G$ is identity on $G$ and $\hat{e}_G$ is the composition of $e$ and the unique morphism $G\to R^{\overrightarrow{\textbf{0}}}$. In addition $\Delta_G$ is the diagonal map on $G$.
\end{definition}
Note that, there is a Lie group associated with each $\Zn-$super Lie group. Indeed, let $G$ be a $\Zn-$super Lie group and $G_0$ is reduced manifold associated to $G$ and $\mu_0, i_0, e_0$ are reduced morphisms associated to $\mu, i, e$ respectively. Since $G \rightarrow G_0$ is a functor, $(G_0, \mu_0, i_0, e_0)$ is a group-object of the category of differentiable manifolds.

\medskip
\begin{remark}\label{remark2}
Simply, one can show that any $\Zn-$super Lie group $G$ induced a group structure over its $T$-points for any arbitrary $\Zn-$supermanifold $T$. This means that the functor $T\rightarrow G(T)$ takes values in category of groups. Moreover, for any other $\Zn-$supermanifold $S$ and morphism $T\rightarrow S$, the corresponding map $G(S)\rightarrow G(T)$ is a homomorphism of groups. One can also define a $\Zn-$super Lie group as a representable functor $T\rightarrow G(T)$ from category $\textbf{ZSM}$ to category of groups. If such functor represented by a $\Zn-$supermanifold $G$, then the maps $\mu,i,e$ are obtained by Yoneda's lemma and the maps $\mu_{_T}:G(T)\times G(T)\rightarrow G(T),\quad i_{_T}:G(T)\rightarrow G(T)$ and $e_{_T}:\mathbb{R}^{\overrightarrow{\textbf{0}}}(T)\rightarrow G(T)$.
\end{remark}
\begin{remark}\label{charttheorem}
Consider the $\Zn-$superdomain $\mathbb R^{\bm}$ and an arbitrary $\Zn-$ supermanifold $T$. Let $f^j_i\in\cO(T)_{\gamma_i}$, $0\leq i \leq \qn$, $1\leq j \leq m_j$, be $\gamma_i-$degree elements. By Theorem 6.8 in \cite{supergeometry I} (Fundamental theorem of $\Zn$-morphisms), One may define a unique morphism $\psi:T\rightarrow \mathbb R^{\bm}$, 
by setting $\xi_i^j\mapsto f_i^j$ where $(\xi_i^j)$ is a  global coordinates system on $\mathbb R^{\bm}$. Thus $\psi$ may be represented by $(f_i^j)$.
\end{remark}
\begin{example}
Let $(t, \xi, \eta, \nu)$ be a global coordinates system on $\mathbb Z_2^2-$ superdomain $\mathbb{R}^{1|1|1|1}$. Let $T$ be an arbitrary supermanifold, we define:
\begin{align*}
   \mu_{_T}:\mathbb{R}^{1|1|1|1}(T)\times \mathbb{R}^{1|1|1|1}(T)& \longrightarrow  \mathbb{R}^{1|1|1|1}(T)
    \\
          (g_0,g_1,g_2,g_3),(g_0^{\prime},g_1^{\prime}, g_2^{\prime}, g_3^{\prime}) & \longmapsto (g_0+g_0^{\prime}, g_1+g_1^{\prime}, g_2+g_2^{\prime}, g_3+g_3^{\prime}),
\end{align*}
where
$g_i,g_i^{\prime}\in\cO(T)_{\gamma_i}$, $i=0,1,2,3.$ 
It follows that $\mathbb{R}^{1|1|1|1}(T)$ with $\mu_{_T}$ is a common group. Thus, by Remark \ref{remark2}, $\mathbb{R}^{1|1|1|1}$ is a $\mathbb Z_2^2-$super Lie group.  
\end{example} 
Analogously, one may show that the  $\Zn-$supersuperdomain $\mathbb{R}^{\bm}$ is a $\Zn-$super Lie group. 
\begin{example}
Let $V$ be a finite dimensional $\Zn$-super vector space of dimension $m_0|m_1|\ldots|m_\qn$ and let $\{R_1,\ldots, R_{m_0}, R_{m_0+1},\ldots, R_{m_0+m_1}, \ldots, R_{m_0+\ldots+m_\qn}\}$ be a  basis of $V$ for which the elements 
$R_{m_0+ ...+m_{i-1}+ k},$ $1\leq k\leq m_i,$
are of weight $\gamma_i$ for $0\leq i\leq\qn$.
Consider the functor $F$ from the category $\textbf{ZSM}$ to $\textbf{GRP}$ the category of groups which 
maps each $\Zn-$seupermanifold $T$ to $Aut_{\mathcal O(T)}(\mathcal O(T)\otimes V)$ the group of zero weight automorphisms of $\mathcal O(T)\otimes V$. Consider the $\Zn-$supermanifold $\textbf{End}(V)=\Big(\prod_{i} End(V_i),\mathcal{A}\Big)$ where $\mathcal{A}$ is the following sheaf 
\begin{align}\label{sheafEnd}
C^{\infty}_{\mathbb{R}^{m_0^2+ \ldots + m_\qn^2}}[[\xi_1^1,\ldots,\xi_1^{t_1},\xi_2^1,\ldots,\xi_2^{t_2},\ldots,\xi_3^1,\ldots,\xi_\qn^{t_\qn}]].
\end{align}
where $t_k=\sum_{\gamma_i+\gamma_j=\gamma_k}m_im_j.$ Let 
$F_{ij}$ be a linear transformation on $V$ defined  by $R_k\mapsto \delta_{ik}R_j$, then $\{F_{ij}\}$ is a basis for $\textbf{End}(V)$. If $\{f_{ij}\}$ is the corresponding dual basis, then it may be considered as a global coordinates on $\textbf{End}(V)$. Let $X$ be the open subsupermanifold of $\textbf{End}(V)$ corresponding to the open set:
$$\overline X=\prod_{i} GL(V_i)\subset \prod_{i} End(V_i).$$
Thus, we have
$$X=\Big(\prod_{i} GL(V_i),\mathcal{A}|_{\prod_{i} GL(V_i)}\Big).$$
It can be shown that the functor $F$ may be represented by $X$. For this, one may show that $Hom(T,X)\cong Aut_{\mathcal O(T)}(\mathcal O(T)\otimes V)$. To this end, first, note that
$$Hom(T,X)=Hom\Big(\mathcal A(X),\mathcal O(T)\Big).$$
 It is known that each $\psi\in Hom(\mathcal A(X),\mathcal O(T))$ may be uniquely determined by $\{g_{ij}\}$ where $g_{ij}=\psi(f_{ij})$, see \cite{vsv}.
 Now set $\Psi(R_j):=\Sigma g_{ij}R_i$.
 One may consider $\Psi$ as an element of $Aut_{\mathcal{O}(T)}(\mathcal{O}(T)\otimes V)$. Obviously $\psi\mapsto\Psi$ is a bijection from $Hom(T,X)$ to $Aut_{\mathcal{O}(T)}(\mathcal{O}(T)\otimes V)$.
Thus the $\Zn-$supermanifold $X$ is a $\Zn-$super Lie group and denoted it by $GL(V)$ or $GL(\bm)$ if $V=\mathbb{R}^{\bm}$.
Therefore $T$- points of $GL(\bm)$ are the $\bm\times \bm$ invertible $\Zn-$supermatrices of weight zero
\begin{equation*}
\left[
\begin{array}{c|c|c|c}
B_{00} &  B_{01} & \ldots & B_{0\qn}\\
\hline
\ldots &   &  & \ldots\\
\hline
B_{\qn0} &  B_{\qn1} & \ldots & B_{\qn\qn}\\
\end{array}
\right].
\end{equation*}
where the elements of the $m_k\times m_u$ block $B_{ku}$ have degree $\gamma_k+\gamma_u$ and the multiplication is the matrix product.
\end{example}
Let $x\in\overline{G}$, one can define the left and right translation by $x$ as 
\begin{align}
r_x:=\mu \circ (1_G\times \hat x_G)\circ \Delta_G,\label{pullefttrans}\\
l_x:=\mu \circ (\hat x_G\times 1_G)\circ \Delta_G,\label{pulrighttrans}
\end{align}
respectively. One can show that pullbacks of above morphisms are as following
\begin{align}
r_x^*:=(1_{\cO(G)}\otimes ev_x)\circ \mu^*,\label{lefttrans}\\
l_x^*:=(ev_x\otimes 1_{\cO(G)})\circ \mu^*.\label{righttrans}
\end{align}
One may also use the language of functor of points to describe two morphisms (\ref{pullefttrans}) and (\ref{pulrighttrans}).
\begin{definition}
Let $M$ be a $\Zn-$supermanifold and let $G$ be a $\Zn-$super Lie group with $\mu,i$ and $e$ as its multiplication, inverse and unit morphisms respectively. A morphism $a:M\times G\rightarrow M$ is called a (right) action of $G$ on $M$, if the following diagrams commute
\begin{Small}\begin{displaymath}
\xymatrix{
& M\times G\times G  \ar[rd]^{1_M\times\mu} \ar[ld]_{  a\times 1_G} & \\
M\times G \ar[dr]_a &  & M\times G, \ar[ld]^{a}\\
& M & }\quad
\xymatrix{
& M\times G \ar[rd]^{a} & \\
M\ar[ur]^{(1_M\times \hat{e}_{_M})\circ \Delta_M} \ar[rr]_{1_M} &  & M,}
\end{displaymath}\end{Small}
where $\hat{e}_{_M}, \Delta_M$ are as above. In this case, we say G acts from right on $M$. One can define left action analogously. 
\end{definition}
According to the above diagrams, one has:
$$\begin{matrix} a\circ(1_M\times\mu)=a\circ(a\times 1_G),&\qquad\qquad & a\circ(1_M\times\hat{e}_M)\circ \Delta_M=1_M. \end{matrix}$$
By Yoneda lemma (Lemma \ref{Yoneda}), one may consider, equivalently, the action of G as a natural transformation: 
$$a(.):M(.)\times G(.)\rightarrow M(.).$$
Thus for each supermanifold $T$, the morphism $a(T): M(T)\times G(T)\rightarrow  M(T)$ is an action of group $G(T)$ on the set $M(T)$. This means:
\begin{enumerate}
\item[1.] $(\PP.\QQ_1).\QQ_2=\PP.(\QQ_1\QQ_2),\qquad \forall \QQ_1,\QQ_2\in G(T), \forall \PP\in M(T).$
\item[2.] $\PP.\hat{e}_{_T}=\PP,\qquad \forall  \PP\in M(T).$
\end{enumerate}
Let $p\in\overline{M},$ define
\begin{align*} & a_p:G\rightarrow M, \qquad\qquad\qquad\qquad\qquad a^g:M\rightarrow M,\\ & a_p:=a\circ (\hat{p}_{_G}\times 1_{_G})\circ \Delta_{_G}, \qquad\qquad a^g:=a\circ (\hat{g}_{_M}\times 1_M)\circ \Delta_M, \end{align*}
where $\hat{p}_{_G}$ and $\hat{g}_{_M}$ are the morphism (\ref{phat}) for $p\in \overline M$ and $g\in \overline{G}$ respectively. Equivalently, these maps may be defined as
\begin{align}\label{apag} (a_p)_{_T}:& G(T)\rightarrow M(T), \qquad\qquad\qquad (a^g)_{_T}:M(T)\rightarrow M(T),\nonumber\\ & \QQ\longmapsto \hat{p}_{_T}.\QQ, \qquad\qquad\qquad\qquad\qquad\quad \PP\longmapsto \PP.\hat{g}_{_T}. \end{align}
One may easily show that $a_p$ has constant rank(see Proposition 8.1.5 in \cite{carmelibook}, for more details). Before next definition, we recall
that a morphism between $\Zn-$ supermanifolds, say $\psi:M\rightarrow N$ is a submersion at $x\in\overline {M}$, if $(d\psi)_x$ is surjective and $\psi$ is called submersion, if it is surjective at each point. (For more details, refer to \cite{vsv}, \cite{carmelibook}). Also $\psi$ is a \textit{surjective submersion}, if in addition $\psi_0$ is surjective.
\begin{definition}
 Let $G$ acts on $M$ with action $a:M\times G \rightarrow M$. The action $a$ is called transitive, if there exist $p\in \overline {M}$ such that $a_p$ is a surjective submersion.  
 \end{definition}
It is shown that, if $a_p$ is a submersion for one $p\in \overline {M}$, then it is a submersion for all point in $\overline {M}$.
The following proposition will be required in the last section.
\begin{proposition}\label{Transitive}
Let $a:M\times G \rightarrow M$ be an action. Then $a$ is transitive if and only if for a $p\in\overline{M},$ $(a_p)_{\mathbb{R}^{\br^\prime}}:G(\mathbb{R}^{\br^\prime})\rightarrow M(\mathbb{R}^{\br^\prime})$ is surjective, where $\br=(r_0,r_1,\ldots,r_\qn)$ is the dimension of $G$ and $\br^\prime:=(0,r_1,\ldots,r_\qn).$
\end{proposition}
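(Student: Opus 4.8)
The plan is to prove the two implications separately, with the forward direction being essentially trivial and the reverse direction containing all the work. For the forward implication, suppose $a$ is transitive, so by definition there exists $p\in\overline{M}$ such that $a_p:G\to M$ is a surjective submersion. I would invoke the standard fact that a surjective submersion of $\Zn$-supermanifolds induces a surjective map on $T$-points for every $T$ (this is the natural-transformation incarnation of surjectivity of $a_p(.)$); specializing to $T=\mathbb{R}^{\br'}$ immediately gives that $(a_p)_{\mathbb{R}^{\br'}}:G(\mathbb{R}^{\br'})\to M(\mathbb{R}^{\br'})$ is surjective. I would want to double-check that the cited references (\cite{carmelibook}, \cite{vsv}) supply the $\Zn$-analogue of ``surjective submersion implies surjective on points,'' or reprove it quickly from the local normal form of a submersion.

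For the reverse implication, I would assume $(a_p)_{\mathbb{R}^{\br'}}$ is surjective for some $p\in\overline{M}$ and aim to show $a_p$ is a surjective submersion. Recall from the discussion preceding the proposition that $a_p$ has constant rank and that being a submersion at one point propagates to all points; thus it suffices to establish submersion and topological surjectivity. The key device is the special test supermanifold $\mathbb{R}^{\br'}$: its reduced manifold is a single point, so a $T$-point with $T=\mathbb{R}^{\br'}$ is an ``infinitesimal'' or ``purely odd-directions'' point concentrated over one base point, and the surjectivity of $(a_p)_{\mathbb{R}^{\br'}}$ should be read as saying that $a_p$ hits every such infinitesimal neighborhood. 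Concretely, I would argue contrapositively: if $a_p$ failed to be a submersion (equivalently, by constant rank, if $(da_p)_e$ were not surjective), then the image of $a_p$ would be constrained to a lower-dimensional sub-supermanifold through $p$, and one could produce an $\mathbb{R}^{\br'}$-point of $M$ not lying in the image of $(a_p)_{\mathbb{R}^{\br'}}$, contradicting the hypothesis.

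The main obstacle I anticipate is making the previous paragraph's dimension-counting precise in the $\Zn$-graded setting, specifically translating ``$(a_p)_{\mathbb{R}^{\br'}}$ is surjective'' into a statement about the differential $(da_p)_e$ at the identity and about the reduced map $(a_p)_0$. The choice $\br'=(0,r_1,\ldots,r_\qn)$ is clearly deliberate: the zeroth (degree-$\gamma_0$, i.e. fully even) coordinate is set to $0$, so an $\mathbb{R}^{\br'}$-point carries exactly the data of the non-zero-degree tangent directions of $G$ at a single reduced point. The crux is to show that surjectivity on these particular test points captures both that $(da_p)_e$ is onto in the graded sense and that the reduced orbit map $(a_p)_0$ covers a full neighborhood, so that $a_p$ is a submersion with $\overline{a_p}$ surjective. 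I would handle this by reducing to the local chart description of $a_p$ near $e$ using Remark \ref{charttheorem}: writing $a_p$ in coordinates, an $\mathbb{R}^{\br'}$-point is given by a tuple of homogeneous elements $(f_i^j)$ of $\cO(\mathbb{R}^{\br'})$ of the appropriate degrees $\gamma_i$, and surjectivity of $(a_p)_{\mathbb{R}^{\br'}}$ amounts to solvability of the pulled-back coordinate equations for every target $\mathbb{R}^{\br'}$-point of $M$.

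Finally, to close the argument I would combine the constant-rank property with the propagation statement ``submersion at one point implies submersion everywhere'' to upgrade the pointwise conclusion at $e$ to a genuine submersion, and then verify surjectivity of the underlying continuous map $\overline{a_p}$ from the surjectivity of $(a_p)_0$ extracted above; together these give that $a_p$ is a surjective submersion, which is exactly transitivity of $a$. Throughout I would lean on the equivalence between morphisms and natural transformations of functors of points furnished by the Yoneda lemma (Lemma \ref{Yoneda}), so that passing back and forth between the geometric morphism $a_p$ and its effect $(a_p)_T$ on $T$-points is justified.
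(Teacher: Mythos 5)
The paper's own ``proof'' of this proposition is a single line: it defers everything to Proposition 9.1.4 of \cite{carmelibook} ``with appropriate modifications.'' Your outline reconstructs essentially that argument (forward direction by lifting points along a submersion, reverse direction via the constant-rank property of $a_p$ and the special test object $\mathbb{R}^{\br^\prime}$), so you are on the same route as the source the paper cites. There is, however, one step that would fail as you state it. The ``standard fact'' you invoke for the forward direction --- that a surjective submersion induces a surjective map on $T$-points for \emph{every} $T$ --- is false already for ordinary manifolds: the covering $\mathbb{R}\to S^1$ is a surjective submersion, yet the identity $S^1$-point of $S^1$ does not lift. What is true, and what the forward direction actually rests on, is that a submersion admits local sections through every point of its source, and that a $T$-point with $\overline{T}$ a single point (as for $T=\mathbb{R}^{\br^\prime}$, whose reduced space is $\mathbb{R}^0$) factors through an arbitrarily small chart around one point of $\overline{M}$, so the lifting problem is purely local and solved by a local section. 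This is exactly why $\mathbb{R}^{\br^\prime}$ is the right test object; your fallback of reproving the claim from the local normal form would lead you to the correct local statement, but the global claim as written must be discarded.

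On the reverse direction your plan is sound in outline but stops at intentions where the actual content lies. You still owe: (i) the extraction of surjectivity of $\overline{a_p}$ from surjectivity on $\mathbb{R}^{\br^\prime}$-points --- this part is easy, since every $q\in\overline{M}$ yields the point $\hat{q}_{_{\mathbb{R}^{\br^\prime}}}$ and the reduced part of any preimage is a $g\in\overline{G}$ with $\overline{a_p}(g)=q$, after which constant rank plus surjectivity of the reduced map forces the degree-$\gamma_0$ part of the differential to be onto; and (ii) the construction, via the constant rank theorem, of an $\mathbb{R}^{\br^\prime}$-point of $M$ missing the image when the rank is not full. For (ii) note a genuinely $\Zn$-specific subtlety your sketch glosses over: the nonzero degrees $\gamma_i\neq\gamma_0$ with $\langle\gamma_i,\gamma_i\rangle=0$ are even but invisible to the reduced manifold, and the generators of $\mathcal{O}(\mathbb{R}^{\br^\prime})$ of such degrees are not nilpotent, so the structure algebra is a formal power series algebra rather than an exterior algebra. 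Handling these degrees is precisely the ``appropriate modification'' the paper leaves implicit, and a complete write-up would have to address it explicitly.
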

\begin{proof} The proof is the same as the proof of proposition 9.1.4 in \cite{carmelibook} with appropriate modifications. 
\end{proof}
\begin{definition}
Let $G$ be a $\Zn-$super Lie group and let $a$ be an action of $G$ on $\Zn-$supermanifold $M$. By \textit{stabilizer} of $p\in \overline {M},$ we mean a $\Zn-$supermanifold $G_p$ equalizing the diagram $$G\overset{a_p}{\underset{\hat{p}_G} {\rightrightarrows}}M.$$
\end{definition}
\begin{proposition}\label{Isotropic}
Let $a:M\times G \rightarrow M$ be an action, then
\begin{enumerate} 
\item[1.] The  following diagram admits an equalizer $G_p$ 
$$G\overset{a_p}{\underset{\hat{p}_G} {\rightrightarrows}}M.$$
\item[2.] $G_p$ is a $\Zn-$subsuper Lie group of $G$.
\item[3.] The functor $T\rightarrow (G(T))_{\hat{p}_{_T}}$ is represented by $G_p$, where $(G(T))_{\hat{p}_{_T}}$ is the stabilizer in $\hat{p}_{_T}$ of the action of $G(T)$ on $M(T)$.
\end{enumerate} \end{proposition}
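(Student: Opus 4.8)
The plan is to prove the three claims in sequence, using the functor-of-points description throughout and invoking Yoneda's lemma (Lemma \ref{Yoneda}) to pass between morphisms and natural transformations. I would first establish claim (3), since the representability of the stabilizer functor is the conceptual heart of the matter, and claims (1) and (2) will follow from it together with general categorical facts.

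For claim (3), I would begin by identifying, for each $\Zn$-supermanifold $T$, the set $(G(T))_{\hat{p}_{_T}}$ concretely. By the discussion following the definition of action, $G(T)$ acts on $M(T)$ on the right, and the stabilizer of the point $\hat{p}_{_T}\in M(T)$ is the set of those $\QQ\in G(T)$ with $\hat{p}_{_T}.\QQ=\hat{p}_{_T}$, equivalently those $\QQ$ for which $(a_p)_{_T}(\QQ)=\hat{p}_{_T}=(\hat{p}_G)_{_T}(\QQ)$ by the formulas (\ref{apag}). Thus $(G(T))_{\hat{p}_{_T}}$ is precisely the equalizer, in $\textbf{SET}$, of the two maps $(a_p)_{_T}$ and $(\hat{p}_G)_{_T}$ from $G(T)$ to $M(T)$. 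The assignment $T\mapsto (G(T))_{\hat{p}_{_T}}$ is then a subfunctor of $G(.)$, and I would check naturality: for a morphism $T\to S$, the group homomorphism $G(S)\to G(T)$ of Remark \ref{remark2} carries stabilizers to stabilizers because $a(.)$, $a_p(.)$ and $\hat p_G(.)$ are all natural transformations. To show this functor is representable by a $\Zn$-supermanifold $G_p$, I would realize $G_p$ as the equalizer of $a_p$ and $\hat p_G$ in $\textbf{ZSM}$; the key point is that equalizers of the relevant morphisms exist in $\textbf{ZSM}$ (this is where the submanifold structure enters), after which the representing object automatically represents the equalizer functor since the Yoneda embedding preserves and reflects limits.

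For claim (1), the existence of the equalizer $G_p$, I would use that $\hat p_G$ is the constant morphism at $p$ and that $a_p$ has constant rank, as noted just before Proposition \ref{Transitive}. The standard construction realizes the equalizer as the fiber $a_p^{-1}(p)$, and constant rank of $a_p$ guarantees that this fiber is a closed embedded $\Zn$-subsupermanifold, giving the required universal property; I would verify the universal property directly against the functor-of-points characterization from claim (3), so that (1) and (3) are proved together. For claim (2), that $G_p$ is a $\Zn$-sub-super Lie group, the cleanest route is functorial: since $T\mapsto (G(T))_{\hat p_{_T}}$ takes values in subgroups of $G(T)$ (a stabilizer of a group action is a subgroup), the functor $T\mapsto (G_p)(T)$ lands in $\textbf{GRP}$ and the inclusion $(G_p)(T)\hookrightarrow G(T)$ is a group homomorphism natural in $T$; by Remark \ref{remark2} this endows $G_p$ with a $\Zn$-super Lie group structure compatible with that of $G$, so $G_p$ is a sub-super Lie group.

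I expect the main obstacle to be the existence and embedded-submanifold nature of the equalizer $G_p$ in $\textbf{ZSM}$ in claim (1), rather than the functorial bookkeeping. Concretely, one must show that $a_p^{-1}(p)$ carries a canonical $\Zn$-supermanifold structure making the inclusion a morphism with the correct universal property; the constant-rank property of $a_p$ is what makes this work, via a $\Zn$-graded implicit/rank theorem, but transporting the ordinary supergeometric argument (Proposition 8.1.5 and its neighborhood in \cite{carmelibook}) into the $\Zn$-graded setting requires care with the additional grading pieces and with the fact that $\overline{j}_p$ maps from a point. Once the equalizer exists as a $\Zn$-supermanifold, the identification of its functor of points with $T\mapsto (G(T))_{\hat p_{_T}}$ is forced by Yoneda and the limit-preservation of the Yoneda embedding, and the group-object structure of claim (2) is then immediate.
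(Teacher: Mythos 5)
Your outline is correct and follows essentially the same route as the paper, which gives no argument of its own but simply defers to Proposition 8.4.7 of \cite{carmelibook}; that proof proceeds exactly as you describe (identify the stabilizer functor $T\mapsto (G(T))_{\hat{p}_{_T}}$ with the equalizer of $(a_p)_{_T}$ and $(\hat{p}_G)_{_T}$ in $\textbf{SET}$, realize the equalizer in the geometric category as the fiber of $a_p$ over $p$ using constant rank, and conclude representability and the subgroup structure by Yoneda). The only caveat is the one you already flag yourself: the $\Zn$-graded constant-rank/closed-embedding theorem needed to give $a_p^{-1}(p)$ its $\Zn$-supermanifold structure must be imported from the $\Zn$-supergeometry references, which is precisely the ``appropriate modification'' the paper leaves implicit.
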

\begin{proof} The proof is the same as the proof of proposition 8.4.7 in \cite{carmelibook} with appropriate modifications. \end{proof}
We end this section with the next proposition which is a straightforward $\Zn$ generalization of the proposition 6.5 in \cite{g-spaces}. 
 \begin{proposition}\label{equivariant}
 Suppose $G$ acts transitively on $M$. There exists a $G$-equivariant isomorphism 
\begin{displaymath}\xymatrix{ \dfrac{G}{G_p}\ar[r]^{\cong} & M. } \end{displaymath} \end{proposition}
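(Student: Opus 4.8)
The plan is to prove this by constructing the isomorphism on the level of functor of points and then invoking Yoneda's lemma (Lemma \ref{Yoneda}) to descend it to an honest isomorphism of $\Zn$-supermanifolds. First I would form the quotient $\Zn$-supermanifold $G/G_p$, where $G_p$ is the stabilizer furnished by Proposition \ref{Isotropic}; the existence of this quotient as a $\Zn$-supermanifold, together with the canonical projection $\pi: G \to G/G_p$, is what makes the statement meaningful, so I would either assume a quotient-construction theorem in the $\Zn$-setting or cite the $\Zn$-generalization of the corresponding result in \cite{g-spaces} and \cite{carmelibook}. The natural candidate for the map $G/G_p \to M$ is induced by the orbit map $a_p: G \to M$ (defined in (\ref{apag})), using that $a_p$ is constant on the cosets of $G_p$ by the very definition of the stabilizer as the equalizer of $a_p$ and $\hat{p}_G$.

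Next I would check $G$-equivariance and well-definedness at the functorial level. For each $\Zn$-supermanifold $T$, Proposition \ref{Isotropic}(3) identifies $G_p(T)$ with the stabilizer $(G(T))_{\hat{p}_{_T}}$ of the point $\hat{p}_{_T} \in M(T)$ under the action $a(T)$ of the group $G(T)$ on the set $M(T)$. Thus $(G/G_p)(T)$ should be identified with the set of cosets $G(T)/(G(T))_{\hat{p}_{_T}}$, and the orbit-stabilizer theorem for the ordinary group action of $G(T)$ on $M(T)$ gives a canonical injection $G(T)/(G(T))_{\hat{p}_{_T}} \hookrightarrow M(T)$ sending a coset $\QQ \cdot (G(T))_{\hat{p}_{_T}}$ to $\hat{p}_{_T}.\QQ = (a_p)_{_T}(\QQ)$. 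This family of maps is natural in $T$ and manifestly $G$-equivariant, since left translation by an element of $G(T)$ on cosets corresponds to the group action on the image points.

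The crucial point is surjectivity, and this is where the transitivity hypothesis does its work. For a fixed $T$ the map $G(T)/(G(T))_{\hat{p}_{_T}} \to M(T)$ need not be surjective, because transitivity of the action $a$ does \emph{not} mean that $G(T)$ acts transitively on $M(T)$ for every $T$. Here I would invoke Proposition \ref{Transitive}: transitivity of $a$ is equivalent to surjectivity of $(a_p)_{\mathbb{R}^{\br^\prime}}$ on the single probe $T = \mathbb{R}^{\br^\prime}$, and surjectivity of $a_p$ as a submersion guarantees that the induced map of functors is an epimorphism in the appropriate sense. The main obstacle, which I expect to be the technical heart of the argument, is to pass from this functorial surjectivity (plus the injectivity and equivariance established above) to the conclusion that $a_p$ factors as a genuine \emph{isomorphism} $G/G_p \xrightarrow{\cong} M$ of $\Zn$-supermanifolds: one must show the induced morphism is both a submersion (inherited from $a_p$ being a surjective submersion, which holds at every point once it holds at one, as noted before Proposition \ref{Transitive}) and injective/immersive, so that it is simultaneously an open embedding and surjective.

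Finally, having built a natural isomorphism between the functors of points of $G/G_p$ and $M$ that respects the $G$-action, I would apply the Yoneda embedding (Lemma \ref{Yoneda}), which is full and faithful, to recover a unique morphism $G/G_p \to M$ of $\Zn$-supermanifolds inducing this natural transformation; since the natural transformation is an isomorphism, so is the morphism, and its equivariance descends from the equivariance of the functorial maps. This completes the identification of $M$ with the homogeneous space $G/G_p$.
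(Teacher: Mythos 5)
The paper does not actually prove Proposition~\ref{equivariant}; it is asserted as a ``straightforward $\Zn$ generalization'' of Proposition 6.5 in \cite{g-spaces}, so your outline should be measured against that standard argument, whose overall shape (factor the orbit map $a_p$ through the quotient, then show the induced map is an isomorphism) you do reproduce. The one step that fails as literally stated is the identification of $(G/G_p)(T)$ with the coset set $G(T)/(G(T))_{\hat{p}_{_T}}$. The naive coset functor $T\mapsto G(T)/G_p(T)$ is in general not representable; the quotient supermanifold represents only its sheafification, and for a fixed $T$ the canonical map $G(T)/G_p(T)\to (G/G_p)(T)$ need not be a bijection. This is exactly the same phenomenon you correctly flag on the target side (transitivity of $a$ does not make $G(T)$ act transitively on $M(T)$), but it bites on the source side too: the equivariant injection you build via orbit--stabilizer lives on the wrong functor, so you do not yet have a natural transformation $(G/G_p)(\cdot)\to M(\cdot)$, let alone a natural isomorphism to which Lemma~\ref{Yoneda} could be applied.

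The repair, which is what \cite{g-spaces} and \cite{carmelibook} actually do and which you partially anticipate in your ``technical heart'' paragraph, is to bypass the $T$-points description of the quotient altogether. Define $\overline{a}_p: G/G_p\to M$ from the universal property of the quotient, or equivalently at the sheaf level by observing that $a_p^*$ takes values in the invariant subsheaf $\mathcal{O}_{inv}\cap\mathcal{O}_{\mathfrak{h}}$ used to define the structure sheaf of $G/G_p$ at the beginning of Section 3. Then prove it is an isomorphism by a purely local argument: $a_p$ has constant rank, transitivity makes it a surjective submersion, the induced map $\overline{a}_p$ is a submersion between $\Zn$-supermanifolds of equal dimension whose reduced map $\overline{G}/\overline{G}_p\to\overline{M}$ is a bijection by the classical orbit--stabilizer theorem for the reduced Lie group action; hence $\overline{a}_p$ is a local isomorphism that is bijective on underlying points, hence an isomorphism. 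Equivariance is then automatic because $a_p$ itself intertwines right translation on $G$ with the action on $M$. With that substitution your argument becomes the standard proof the paper is implicitly citing.
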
 
\section{$\Zn$-supergrassmannian}\label{supergrass}
Supergrassmannians $G_{k|l}(m|n)$ are introduced by Manin in \cite{ma1} and the authors have studied them in more details in \cite{OurArticle} and \cite{MyArticle}. In this section, we introduce the $\mathbb Z_2^n$-supergrassmannian which is denoted by $G_{k_0|k_1|\ldots|k_\qn}(m_0|m_1|\ldots|m_\qn)$ or  
$G_{\bk}(\bm)$ in short.
For convenience from now, we set 
$$\begin{matrix}
&\beta_0:=\sum_{\gamma_i+\gamma_j=\gamma_0}k_i(m_j-k_j),\\
& \beta_1:=\sum_{\gamma_i+\gamma_j=\gamma_1}k_i(m_j-k_j)\\
&\ldots \\
& \beta_\qn:=\sum_{\gamma_i+\gamma_j=\gamma_\qn}k_i(m_j-k_j)\\
\end{matrix}$$
and also decompose any $\Zn-$supermatrix into $2^n\times 2^n$ blocks 
\begin{equation*}
\left[
\begin{array}{c|c|c|c}
B_{00} &  B_{01} & \ldots & B_{0\qn}\\
\hline
\ldots &   &  & \ldots\\
\hline
B_{\qn0} &  B_{\qn1} & \ldots & B_{\qn\qn}\\
\end{array}
\right].
\end{equation*}
such that the elements of block $B_{ku}$ have degree $\gamma_k+\gamma_u$.
By a $\Zn$-supergrassmannian, $G_{\bk}(\bm)$, we mean a
$\Zn$-supermanifold which is constructed by gluing $\Zn$- superdomains
$\mathbb{R}^{\bb}=\Big(\mathbb{R}^{\beta_0}, \, C^{\infty}_{\mathbb{R}^{\beta_0}}(-)[[\xi_1^1,\ldots,\xi_1^{\beta_1},\xi_2^1,\ldots,\xi_2^{\beta_2},\ldots,\xi_\qn^1,\ldots,\xi_\qn^{\beta_\qn}]]\Big)$ 
as follows:\\
For $i=0,1,\ldots,\qn,$ let $I_i \subset \{1, \ldots, m_i\}$ be a sorted subset in ascending order with 
$ k_i $ elements. The elements of $ I_i$ are called $\gamma_i$-degree indices. The multi-index $\bI=(I_1, ..., I_\qn)$ is called  $\bk$-index. Set $\mathcal U_{\bI}:=(\overline{U}_{\bI}, \mathcal{O}_{\bI})$, where
$$\overline{U}_{\bI}=\mathbb{R}^{\beta_0} \quad,\quad \mathcal{O}_{\bI}=\, C^{\infty}_{\mathbb{R}^{\beta_0}}(-)[[\xi_1^1,\ldots,\xi_1^{\beta_1},\xi_2^1,\ldots,\xi_2^{\beta_2},\ldots,\xi_\qn^1,\ldots,\xi_\qn^{\beta_\qn}]].$$
Let each $\Zn$-superdomain $\mathcal U_{\bI}$ be labeled by a $\Zn-$supermatrix $\bk\times \bm$ of weight zero, say $A_{\bI}$, with $2^n\times 2^n$ blocks $ B_{ij}$ each of which is a $k_i\times m_j$ matrix. In addition, except for columns with indices in $I_0 \cup I_1\cup \ldots \cup I_\qn$, which together form a minor denoted by $M_{\bI}A_{\bI}$, the matrix is filled from up to down and left to right by $x_a^{\bI}, \xi_b^{\bI}$, the free generators of $\mathcal{O}_{\bI}(\mathbb R^{\beta_0})$ each of them sits in a block with same degree. This process impose an ordering on the set of generators. In addition $ M_{\bI}A_{\bI} $ is supposed to be the identity matrix.

For example, consider $G_{1|2|1|1|}{2|2|2|2}$. Then let $I_0=\{1\}, I_1=\{1,2\}, I_2=\{1\}, I_3=\{2\}$, so $ \bI $ is a $1|2|1|1$-index. In this case the set of generators of $\mathcal{O}_{\bI}(\mathbb{R}^{\beta_0})$ is 
\begin{equation*} \{x^1, x^2, x^3, \xi_1^1,\xi_1^2,\xi_1^3,\xi_1^4,\xi_2^1, \xi_2^2, \xi_2^3, \xi_2^4, \xi_3^1, \xi_3^2, \xi_3^3, \xi_3^4\}, \end{equation*}
 and $ A_{\bI} $ is:
\begin{equation*}
\left[
\begin{array}{cc|cc|cc|cc}
1 & x^1 & 0 & 0 & 0 & \xi_2^2 & \xi_3^4 & 0 \\
\hline
0 & \xi_1^1 & 1 & 0 & 0 & \xi_3^2 & \xi_2^3 & 0 \\
0 & \xi_1^2 & 0 & 1 & 0 & \xi_3^3 & \xi_2^4 & 0 \\
\hline
0 & \xi_2^1 & 0 & 0 & 1 & x_2 & \xi_1^4 & 0 \\
\hline
0 & \xi_3^1 & 0 & 0 & 0 & \xi_1^3 & x^3 & 1 \\
\end{array}
\right].
\end{equation*}
Note that, in this example,
\begin{align}\label{ordergen}
\{x^1,\xi_1^1,\xi_1^2,\xi_2^1,\xi_3^1,\xi_2^2,\xi_3^2,\xi_3^3,x_2,\xi_1^3,\xi_3^4,\xi_2^3,\xi_2^4,\xi_1^4 ,x^3\}
\end{align}
is corresponding total ordered set of generators.

By $\widetilde{U}_{{\bI},{\bJ}},$ we mean the set
of all points of $\overline{U}_{\bI}$, on which $M_{\bJ}A_{\bI}$ is invertible.
Obviously $\widetilde{U}_{{\bI},{\bJ}}$ is an
open set.
 The transition map between the two $\Zn$-superdomains $U_{\bI}$ and $U_{\bJ}$ is denoted by 
$$g_{_{\bI,\bJ}}:\Big(\widetilde{U}_{{\bJ},{\bI}},\mathcal{O}_{\bJ}|_{\widetilde{U}_{{\bJ},{\bI}}}\Big)\longrightarrow \Big(\widetilde{U}_{{\bI},{\bJ}},\mathcal{O}_{\bI}|_{\widetilde{U}_{{\bI},{\bJ}}}\Big).$$
Note that $g_{_{\bI,\bJ}}=(\overline{g}_{_{\bI,\bJ}},g^*_{_{\bI,\bJ}}),$ where
$g^*_{_{\bI,\bJ}}$ is an isomorphism between sheaves determined by defining on each entry of $D_{\bI}(A_{\bI})$ as a rational expression
which appears as the corresponding entry provided by the pasting equation 
\begin{equation}\label{transitionmap}
D_{\bI}\bigg(\big(M_{\bI}A_{\bJ}\big)^{-1}A_{\bJ}\bigg)=D_{\bI}(A_{\bI}),\end{equation}
where $D_{\bI}(A_{\bI})$ is a matrix which is remained after omitting $M_{\bI}A_{\bI}$. Clearly, the left hand side of \eqref{transitionmap} is defined whenever $M_{\bI}A_{\bJ}$ is invertible. 
The morphism  $g^*_{_{\bI,\bJ}}$ induces the continuous map $\overline{g}_{_{\bI,\bJ}}$ (in the case $n=1$, see \cite{roshandel}, lemma 3.1).

For example in $G_{1|2|1|1}(2|2|2|2)$ suppose 
\begin{align*}
& I_0=\{1\}, I_1=\{1,2\}, I_2=\{1\}, I_3=\{2\},\\
& J_0=\{2\}, J_1=\{1,2\}, J_2=\{2\}, J_3=\{1\},
\end{align*}
so $\bI, \bJ$ are $ 1|2|1|1 $-indices. We have:
\begin{equation*}
A_{\bI}=\left[
\begin{array}{cc|cc|cc|cc}
1 & x^1 & 0 & 0 & 0 & \xi_2^2 & \xi_3^4 & 0 \\
\hline
0 & \xi_1^1 & 1 & 0 & 0 & \xi_3^2 & \xi_2^3 & 0  \\
0 & \xi_1^2 & 0 & 1 & 0 & \xi_3^3 & \xi_2^4 & 0 \\
\hline
0 & \xi_2^1 & 0 & 0 & 1 & x^2 & \xi_1^4 & 0  \\
\hline
0 & \xi_3^1 & 0 & 0 & 0 & \xi_1^3 & x^3 & 1  \\
\end{array}
\right], \quad 
A_{\bJ}=\left[
\begin{array}{cc|cc|cc|cc}
x^1 & 1 & 0 & 0 & \xi_2^2 & 0 & 0 & \xi_3^4 \\
\hline
\xi_1^1 & 0 & 1 & 0 & \xi_3^2 & 0 & 0 & \xi_2^3  \\
\xi_1^2 & 0 & 0 & 1 & \xi_3^3 & 0 & 0 & \xi_2^4 \\
\hline
\xi_2^1 & 0 & 0 & 0 & x^2 & 1 & 0 & \xi_1^4  \\
\hline
\xi_3^1 & 0 & 0 & 0 & \xi_1^3 & 0 & 1 & x^3  \
\end{array}\right],
\end{equation*}
\begin{equation*}
M_{\bJ}A_{\bI}=\left[
\begin{array}{c|cc|c|c}
x^1 & 0 & 0 & \xi_2^2 & \xi_3^4 \\
\hline
\xi_1^1 & 1 & 0 & \xi_3^2 & \xi_2^3 \\
\xi_1^2 & 0 & 1 & \xi_3^3 & \xi_2^4 \\
\hline
\xi_2^1 & 0 & 0 & x^2 & \xi_1^4 \\
\hline
\xi_3^1 & 0 & 0 & \xi_1^3 & x^3 \\
\end{array}
\right].
\end{equation*}
The maps $g_{_{\bI,\bJ}}$ are gluing morphisms. In fact, a straightforward computation shows the following proposition holds.
\begin{proposition} Let $g_{_{\bI,\bJ}}=\big(\overline{g}_{_{\bI,\bJ}},g^*_{_{\bI,\bJ}}\big)$ be as above, then
\begin{enumerate}
\item[1.] $g^*_{_{\bI,\bI}}=id.$
\item[2.] $g^*_{_{\bJ,\bI}} \circ g^*_{_{\bI,\bJ}}=id.$
\item[3.] $g^*_{_{\bS,\bI}} \circ g^*_{_{\bJ,\bS}}\circ g^*_{_{\bI,\bJ}}=id.$
\end{enumerate}
\end{proposition}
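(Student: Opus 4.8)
The plan is to reduce the three identities to one algebraic fact about a normalization operation on weight-zero supermatrices. For a weight-zero $\bk\times\bm$ supermatrix $A$ and a $\bk$-index $\bL$ with $M_{\bL}A$ invertible, write $\rho(A,\bL):=(M_{\bL}A)^{-1}A$; this is the normalized representative whose $\bL$-minor is the identity, $M_{\bL}\rho(A,\bL)=I$, and whose remaining block $D_{\bL}\rho(A,\bL)$ records exactly the rational expressions appearing on the left-hand side of \eqref{transitionmap}. In this notation the pasting equation \eqref{transitionmap} says that $g^*_{_{\bI,\bJ}}$ is the weight-zero algebra homomorphism determined, via Remark \ref{charttheorem}, by the entrywise prescription $g^*_{_{\bI,\bJ}}(A_{\bI})=\rho(A_{\bJ},\bI)$, the $\bI$-columns being the identity on both sides since $M_{\bI}A_{\bI}=I$.

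First I would record the renormalization identity
\[ \rho\big(\rho(A,\bL),\bR\big)=\rho(A,\bR), \]
valid wherever both sides are defined. This is a short computation: column selection commutes with left multiplication, so $M_{\bR}\big((M_{\bL}A)^{-1}A\big)=(M_{\bL}A)^{-1}M_{\bR}A$, whence
\[ \rho\big(\rho(A,\bL),\bR\big)=\big((M_{\bL}A)^{-1}M_{\bR}A\big)^{-1}(M_{\bL}A)^{-1}A=(M_{\bR}A)^{-1}A=\rho(A,\bR). \]
In particular $\rho(A,\bL)=A$ whenever $M_{\bL}A=I$, which holds for $A=A_{\bL}$.

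Next I would use that every $g^*_{_{\bI,\bJ}}$ is a weight-zero homomorphism of $\Zn$-commutative algebras, hence commutes with the matrix operations out of which $\rho$ is built: trivially with matrix products and with the column selection $M_{\bL}$, and with matrix inversion on the open locus where the relevant minor is invertible, since $g^*(B)\,g^*(B^{-1})=g^*(I)=I$. Applying $g^*_{_{\bJ,\bS}}$ to $g^*_{_{\bI,\bJ}}(A_{\bI})=\rho(A_{\bJ},\bI)$ and using $g^*_{_{\bJ,\bS}}(A_{\bJ})=\rho(A_{\bS},\bJ)$ therefore yields $\rho\big(\rho(A_{\bS},\bJ),\bI\big)=\rho(A_{\bS},\bI)$. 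The three claims now follow. For (1), $g^*_{_{\bI,\bI}}(A_{\bI})=\rho(A_{\bI},\bI)=A_{\bI}$, so $g^*_{_{\bI,\bI}}$ fixes every generator and equals $\mathrm{id}$. For (2), $g^*_{_{\bJ,\bI}}\big(g^*_{_{\bI,\bJ}}(A_{\bI})\big)=\rho\big(\rho(A_{\bI},\bJ),\bI\big)=\rho(A_{\bI},\bI)=A_{\bI}$. For (3), applying $g^*_{_{\bI,\bJ}}$, then $g^*_{_{\bJ,\bS}}$, then $g^*_{_{\bS,\bI}}$ to $A_{\bI}$ produces successively $\rho(A_{\bJ},\bI)$, $\rho(A_{\bS},\bI)$ and finally $\rho(A_{\bI},\bI)=A_{\bI}$; since the composite fixes every generator of $\cO_{\bI}$, it is the identity.

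I expect the difficulty to be bookkeeping rather than conceptual, concentrated in two places. First, one must justify that the pullbacks genuinely commute with matrix inversion in the weight-zero, $\Zn$-commutative setting: here it matters that each minor $M_{\bL}A$ is a weight-zero $\bk\times\bk$ block whose invertibility is precisely the open condition defining the relevant $\widetilde{U}$, so that $g^*$ may legitimately be applied to the rational entries of $\rho$ and distributed over products and inverses with the correct sign conventions. Second, one must check that each composition lives over the correct overlap — for (3), the locus where $M_{\bI}A_{\bJ}$, $M_{\bJ}A_{\bS}$ and $M_{\bS}A_{\bI}$ are simultaneously invertible — and that the underlying maps $\overline{g}_{_{\bI,\bJ}}$ carry these open sets to one another so that the sheaf identities are asserted over matching domains.
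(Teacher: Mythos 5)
Your proof is correct and follows essentially the same route as the paper's: your renormalization identity $\rho\big(\rho(A,\bL),\bR\big)=\rho(A,\bR)$, proved via the observation that column selection commutes with left multiplication, is exactly the inline computation the paper performs for parts (2) and (3), with part (1) likewise reducing to $M_{\bI}A_{\bI}=I$. Packaging this as a standalone lemma and explicitly justifying that the pullbacks, being weight-zero algebra homomorphisms, commute with matrix products and inverses only makes explicit what the paper leaves implicit.
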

\begin{proof}
For first equality, note that the map $ g^*_{_{\bI,\bI}} $ is obtained from the following equality:
\begin{equation*}
D_{\bI}\bigg((M_{\bI}A_{\bI})^{-1}A_{\bI}\bigg)=D_{\bI}A_{\bI},
\end{equation*}
where the matrix $ M_{\bI}A_{\bI} $ is identity. So $g^*_{_{\bI,\bI}}$ is defined by the following equality:
\begin{equation*}
D_{\bI}A_{\bI}=D_{\bI}A_{\bI}.
\end{equation*}
This shows the first equality. For second equality, let $ \bJ $ be an another $ \bk $-index, so $ g^*_{_{\bJ,\bI}} $ is obtained by the following equality:
\begin{equation*}
D_{\bJ}\bigg((M_{\bJ}A_{\bI})^{-1}A_{\bI}\bigg)=D_{\bJ}A_{\bJ}.
\end{equation*}
One may see that $g^*_{_{\bJ,\bI}} \circ g^*_{_{\bI,\bJ}}$ is obtained by following equality:
\begin{equation*}
D_{\bI}\bigg(\bigg(M_{\bI}\Big((M_{\bJ}A_{\bI})^{-1}A_{\bI}\Big)\bigg)^{-1}(M_{\bJ}A_{\bI})^{-1}A_{\bI}\bigg)=D_{\bI}A_{\bI}.
\end{equation*}
For left side, we have
 \begin{align*}&=D_{\bI}\Bigg(\bigg((M_{\bJ}A_{\bI})^{-1}M_{\bI}A_{\bI}\bigg)^{-1}(M_{\bJ}A_{\bI})^{-1}A_{\bI}\Bigg)\\
 &=D_{\bI}\bigg(\Big((M_{\bJ}A_{\bI})^{-1}\Big)^{-1}(M_{\bJ}A_{\bI})^{-1}A_{\bI}\bigg)\\
 &=D_{\bI}\bigg((M_{\bJ}A_{\bI})(M_{\bJ}A_{\bI})^{-1}A_{\bI}\bigg)=D_{\bI}(A_{\bI}).\end{align*}
Accordingly the map $g^*_{_{\bJ,\bI}} \circ g^*_{_{\bI,\bJ}}$ is obtained by $ D_{\bI}A_{\bI}=D_{\bI}A_{\bI} $ and it shows that this map is identity.
For third equality, it is sufficient to show that the map $g^*_{_{\bS,\bI}} \circ g^*_{_{\bJ,\bS}}\circ g^*_{_{\bI,\bJ}}$ is obtained from
\begin{equation*}
 D_{\bI}A_{\bI}=D_{\bI}A_{\bI} .
\end{equation*}
This case obtains from case $2$ analogously. 
\end{proof}
So the sheaves $(\overline{U}_{\bI}, \mathcal O_{\bI})$ may be glued through the $g_{\bI, \bJ}$ to construct the $\Zn-$supergrassmannian $G_{\bk}(\bm)$. Indeed, according to \cite{vsv}, the conditions of the above proposition are necessary and sufficient for gluing.
\section {$\Zn-$Supergrassmannian as homogeneous $\Zn-$superspace}
Let $G=(\overline{G},\mathcal{O}_G)$ be a $\Zn-$super Lie group and  $H=(\overline{H},\mathcal{O}_H)$ be a closed $\Zn-$sub super Lie group of $G$.
One can define a $\Zn-$supermanifold structure on the topological space $\overline{X}=\overline{G}/\overline{H}$ as follows:\\
Let $\mathfrak{g}=Lie(G)$ and $\mathfrak{h}=Lie(H)$ be the $\Zn-$super Lie algebras corresponding with $G$ and $H$. For each $Z\in \mathfrak{g}$, let $D_{Z}$ be the left invariant vector field on $G$ associated with $Z$. For $\mathfrak{h},$ a $\Zn-$subalgebra of $\mathfrak{g},$ set:
$$\forall U\subset \overline{G} \qquad \mathcal{O}_{\mathfrak{h}}(U):=\{f\in \mathcal{O}_{G}(U)| D_{Z}f=0 \quad on\hspace{5pt} U, \quad \forall Z\in \mathfrak{h}\}.$$
On the other hand, for any open subset $U\subset \overline{G}$ set:
$$\mathcal{O}_{inv}(U):=\{f\in\mathcal{O}_{G}(U)|\quad \forall x_0 \in \overline{H},\hspace{5pt} r^*_{x_0}f=f \},$$
where $r_{x_0}$ is the right translation by $x_0$ in \eqref{pulrighttrans}.
If $\overline{H}$ is connected, then $\mathcal{O}_{inv}(U)= \mathcal{O}_{\mathfrak{h}}(U)$.
Let $\overline \pi:\overline G\rightarrow \overline X$ be the natural projection. For each open subset $W\subset \overline{X}=\overline{G}/\overline{H}$, the structure sheaf $\mathcal{O}_X$ is defined as following
$$\mathcal{O}_{X}(W):=\mathcal{O}_{inv}(U)\cap \mathcal{O}_{\mathfrak{h}}(U),$$ 
where $U=\overline{\pi}^{-1}(W).$
One can show that $\mathcal{O}_{X}$ is a sheaf on $\overline{X}$ and the ringed space $X=(\overline{X},\mathcal{O}_{X})$ is a $\Zn-$superdomain locally (See \cite{g-spaces} for more details). So $X$ is a $\Zn-$supermanifold and is called homogeneous $\Zn-$superspace.
In this section, we want to show that the $\Zn-$supergrassmannian $G_{\bk}(\bm)$ is a homogeneous $\Zn-$superspace. According to the section 1, it is enough to find a $\Zn-$super Lie group which acts on $G_{\bk}(\bm)$ transitively. For this, we need the following remark and the next lemma
\begin{remark}\label{morph-matrix}
Let $\XX$ be an element of $U_{\bI}(T)$ where $\bI$ is an arbitrary index. One can correspond to $\XX$ a $\Zn-$supermatrix $\bk\times \bm$ called $[\XX]_{\bI}$ as follows:
Except for columns with indices in $ I_0 \cup I_1 \cup \ldots \cup I_\qn $, the blocks are filled from up to down and left to right by $ f_i, g_j $'s where
$$f_i:=\XX(x_i), \quad g_j:=\XX(\xi_j),$$
according to the ordering (\ref{ordergen}), where $(x_i; \xi_j)$ is the global coordinates of the $\Zn-$superdomain $U_{\bI}$. The columns with indices in $ I_0 \cup I_1 \cup \ldots \cup I_\qn $ form an identity matrix.
\end{remark}
\begin{lemma}
Let $\psi:T\rightarrow \mathbb{R}^{\br}$ be a $T$-point of $\mathbb{R}^{\br}$ and $(z_{tu})$ be a global coordinates of  $\mathbb{R}^{\br}$ with ordering as the one introduced in (\ref{ordergen}). If $B=(\psi^*(z_{tu}))$ is the $\Zn-$supermatrix corresponding to $\psi$, then the $\Zn-$supermatrix corresponding to $\big(g_{_{\bI,\bJ}}\big)_{_T}(\psi)$ is as follows:
\begin{equation*}
 D_{\bI}\Big((M_{\bI}[B]_{\bJ})^{-1}[B]_{\bJ}\Big),
\end{equation*}
where $[B]_{\bJ}$ is introduced in Remark \ref{morph-matrix}.
\end{lemma}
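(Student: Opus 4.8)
The plan is to recognize the asserted identity as the functor-of-points shadow of the pasting equation \eqref{transitionmap} that defines the transition morphism $g_{_{\bI,\bJ}}$. The whole argument rests on the naturality of pullback together with the fact that $\psi^*$ is a unital $\Zn$-algebra homomorphism of weight zero, so that it commutes with every operation occurring in \eqref{transitionmap}.

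First I would translate the statement into pullbacks. By definition of the functor of points, $\big(g_{_{\bI,\bJ}}\big)_{_T}(\psi)=g_{_{\bI,\bJ}}\circ\psi$, which is a $T$-point of the target chart $U_{\bI}$; for this composition to be defined one needs $\psi$ to factor through $\widetilde U_{\bJ,\bI}$, the locus on which $M_{\bI}A_{\bJ}$ is invertible, and I would record this hypothesis at the outset. By Remark \ref{morph-matrix}, the supermatrix $\big[\big(g_{_{\bI,\bJ}}\big)_{_T}(\psi)\big]_{\bI}$ is the $\bk\times\bm$ template for $\bI$ whose entries outside the columns indexed by $I_0\cup\cdots\cup I_\qn$ are the images under $(g_{_{\bI,\bJ}}\circ\psi)^*=\psi^*\circ g^*_{_{\bI,\bJ}}$ of the free generators of $\mathcal O_{\bI}$, that is, of the entries of $D_{\bI}(A_{\bI})$. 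Thus everything reduces to evaluating $\psi^*\big(g^*_{_{\bI,\bJ}}(\text{entry of }D_{\bI}A_{\bI})\big)$.

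Next I would invoke the defining equation \eqref{transitionmap}: $g^*_{_{\bI,\bJ}}$ sends each entry of $D_{\bI}(A_{\bI})$ to the corresponding entry of the rational matrix $D_{\bI}\big((M_{\bI}A_{\bJ})^{-1}A_{\bJ}\big)$, expressed in the coordinates of the $\bJ$-chart. It then remains to push $\psi^*$ through this matrix entrywise. Since the $\psi^*(z_{tu})$ are exactly the entries of $B$, applying $\psi^*$ to the template $A_{\bJ}$ reproduces $[B]_{\bJ}$ (the identity columns are preserved because $\psi^*(1)=1$ and $\psi^*(0)=0$). The operations $M_{\bI}$ (selection of the columns indexed by $\bI$) and $D_{\bI}$ (deletion of those columns) are purely combinatorial rearrangements of entries and hence commute with the entrywise action of $\psi^*$, so $\psi^*(M_{\bI}A_{\bJ})=M_{\bI}[B]_{\bJ}$ and $\psi^*$ commutes with $D_{\bI}$.

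The only genuine point is that $\psi^*$ commutes with the matrix inversion in \eqref{transitionmap}, and I expect this to be the main obstacle. Because $\psi$ factors through $\widetilde U_{\bJ,\bI}$, the entries of $(M_{\bI}A_{\bJ})^{-1}$ are regular along the image of $\psi$ and so lie in the domain of $\psi^*$; applying the ring homomorphism $\psi^*$ entrywise to the identity $(M_{\bI}A_{\bJ})(M_{\bI}A_{\bJ})^{-1}=\id$ yields $(M_{\bI}[B]_{\bJ})\,\psi^*\big((M_{\bI}A_{\bJ})^{-1}\big)=\id$, and likewise on the other side, so that $M_{\bI}[B]_{\bJ}$ is invertible over $\mathcal O(T)$ with $\psi^*\big((M_{\bI}A_{\bJ})^{-1}\big)=\big(M_{\bI}[B]_{\bJ}\big)^{-1}$ (here one uses that invertibility of a $\Zn$-supermatrix is detected on its body, which is nonzero precisely because $\psi$ lands in the correct open locus). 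Combining the observations gives
\begin{equation*}
\psi^*\Big(D_{\bI}\big((M_{\bI}A_{\bJ})^{-1}A_{\bJ}\big)\Big)=D_{\bI}\Big((M_{\bI}[B]_{\bJ})^{-1}[B]_{\bJ}\Big),
\end{equation*}
and comparing this entrywise with the description obtained in the first step identifies $\big[\big(g_{_{\bI,\bJ}}\big)_{_T}(\psi)\big]_{\bI}$ with $D_{\bI}\big((M_{\bI}[B]_{\bJ})^{-1}[B]_{\bJ}\big)$, as claimed. All remaining steps are formal consequences of $\psi^*$ being a weight-zero $\Zn$-algebra homomorphism.
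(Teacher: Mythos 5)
Your proposal is correct and follows essentially the same route as the paper: both arguments pull the defining pasting equation \eqref{transitionmap} back through $\psi^*$ and use the fact that $\psi^*$ is a weight-zero $\Zn$-superalgebra homomorphism, hence commutes entrywise with the rational expressions (in the paper's notation, $\psi^*(m^{tk}(z))=m^{tk}(\psi^*(z))$). Your treatment of the matrix-inversion step via $(M_{\bI}A_{\bJ})(M_{\bI}A_{\bJ})^{-1}=\id$ and of the domain condition on $\psi$ is somewhat more explicit than the paper's, but it is the same argument.
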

\begin{proof}
Note that $g^*_{\bI, \bJ}$ may be represented by a $\Zn-$supermatrix as follows:
\begin{equation*}
 D_{\bI}\Big((M_{\bI}A_{\bJ})^{-1}A_{\bJ}\Big),
\end{equation*}
where $A_{\bJ}$ is the label of $U_{\bJ}.$
Let $M_{\bI}A_{\bJ}=(m_{tu})$ and $(M_{\bI}A_{\bJ})^{-1}=(m^{tu})$. If $z=(z_{ij})$ be a coordinates system on $U_{\bI}$, then one has
$$g^*_{\bI, \bJ}(z_{tu})=\sum m^{tk}(z).z_{ku}.$$
Then
\begin{align*}
\psi^*\circ g^*_{\bI, \bJ}(z_{tu})&= \psi^*\big(\sum m^{tk}(z).z_{ku}\big)=\sum m^{tk}\big(\psi^*(z)\big).\psi^*(z_{ku}).
\end{align*}
For second equality one may note that $\psi^*$ is a homomorphism of $\Zn-$superalgebras and $m^{tk}(z)$ is a rational function of $z$. 
Obviously, the last expression is the $(t, u)$-entry of the matrix $D_{\bI}\Big((M_{\bI}[B]_{\bJ})^{-1}[B]_{\bJ}\Big)$. This completes the proof.
\end{proof}
\begin{theorem}
The $\Zn-$super Lie group $GL(\bm)$ acts on $\Zn-$ supergrassmannian $G_{\bk}(\bm)$. 
\end{theorem}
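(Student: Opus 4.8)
The plan is to define the action via its functor of points and then invoke Yoneda's lemma (Lemma \ref{Yoneda}) to obtain the actual morphism of $\Zn$-supermanifolds. For each $\Zn$-supermanifold $T$, I would describe the map
\begin{equation*}
a(T):G_{\bk}(\bm)(T)\times GL(\bm)(T)\longrightarrow G_{\bk}(\bm)(T)
\end{equation*}
on $T$-points. A $T$-point of $GL(\bm)$ is an invertible $\bm\times\bm$ $\Zn$-supermatrix of weight zero, and a $T$-point $\XX$ of $G_{\bk}(\bm)$ restricted to the chart $U_{\bI}$ corresponds, via Remark \ref{morph-matrix}, to a $\bk\times\bm$ matrix $[\XX]_{\bI}$. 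The natural guess for the local action is right matrix multiplication: send the pair consisting of $[\XX]_{\bI}$ and the invertible matrix $Q\in GL(\bm)(T)$ to the $T$-point of the grassmannian whose $\bI$-matrix is obtained from $[\XX]_{\bI}\cdot Q$ by normalizing, i.e. by the $D_{\bI}\big((M_{\bI}(\,\cdot\,))^{-1}(\,\cdot\,)\big)$ operation that appears throughout section \ref{supergrass}.

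First I would fix one chart $U_{\bI}$ and define a local action $a_{\bI}(T)$ on $U_{\bI}(T)\times GL(\bm)(T)$ by the recipe above; the matrix $M_{\bI}\big([\XX]_{\bI}Q\big)$ is invertible precisely on the locus where the product still lies in the $\bI$-chart, so this defines a morphism from the appropriate open subsupermanifold of $U_{\bI}\times GL(\bm)$ into $U_{\bI}$. I would check that each $a_{\bI}(T)$ is a genuine group action on the set of $T$-points, namely that it respects the multiplication and unit of $GL(\bm)(T)$; this is the associativity of matrix multiplication together with the fact that normalization commutes appropriately, and it reduces to a bookkeeping verification using that the pasting operation $X\mapsto D_{\bI}((M_{\bI}X)^{-1}X)$ is idempotent in the relevant sense. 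Because this description is natural in $T$ (it is built from $\psi^*$ applied entrywise to rational expressions, exactly as in the preceding Lemma), Yoneda gives an honest morphism $a_{\bI}:U_{\bI}\times GL(\bm)\to U_{\bI}$ for each $\bk$-index $\bI$.

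The main work, and the step I expect to be the principal obstacle, is showing that these local actions $a_{\bI}$ are compatible on overlaps and therefore glue to a global morphism $a:G_{\bk}(\bm)\times GL(\bm)\to G_{\bk}(\bm)$. Concretely I would need the square relating $a_{\bI}$ and $a_{\bJ}$ through the transition morphisms $g_{\bI,\bJ}$ to commute on $T$-points: starting from $[\XX]_{\bJ}$, passing to the $\bI$-chart via $g_{\bI,\bJ}$, and then acting by $Q$ must agree with first acting by $Q$ in the $\bJ$-chart and then transitioning. Using the previous Lemma, the $\bI$-matrix of $(g_{\bI,\bJ})_T(\psi)$ is $D_{\bI}\big((M_{\bI}[B]_{\bJ})^{-1}[B]_{\bJ}\big)$, so both routes amount to two ways of normalizing the product $[B]_{\bJ}\,Q$; the identity
\begin{equation*}
D_{\bI}\Big(\big(M_{\bI}([B]_{\bJ}Q)\big)^{-1}[B]_{\bJ}Q\Big)
=D_{\bI}\Big(\big(M_{\bI}(C\,Q)\big)^{-1}C\,Q\Big),\qquad C=(M_{\bJ}[B]_{\bJ})^{-1}[B]_{\bJ},
\end{equation*}
follows because left multiplication by the invertible minor $M_{\bJ}[B]_{\bJ}$ cancels in the normalization $D_{\bI}((M_{\bI}(\cdot))^{-1}(\cdot))$, which is invariant under left multiplication by any invertible matrix. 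This invariance is the same computation that powered the cocycle proposition for the $g^*_{\bI,\bJ}$, so I would isolate it as the key algebraic fact and apply it verbatim.

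Finally, having the compatible family $\{a_{\bI}\}$, I would invoke the gluing criterion (the same one from \cite{vsv} used to construct $G_{\bk}(\bm)$ itself) to produce the global action morphism $a$, and then note that the two group-action axioms for $a$ hold because they hold chartwise on $T$-points and the Yoneda embedding is faithful. This yields the asserted action of $GL(\bm)$ on $G_{\bk}(\bm)$, leaving transitivity (via Proposition \ref{Transitive}) to the subsequent results.
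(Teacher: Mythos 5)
Your overall strategy---define the action on $T$-points chart by chart as right matrix multiplication followed by the normalization $X\mapsto D(\,(MX)^{-1}X\,)$, verify compatibility with the transition maps $g_{\bI,\bJ}$ using the left-invariance of the normalization, and then glue---is the same as the paper's, and your identification of the key algebraic fact (invariance of $D_{\bL}\big((M_{\bL}(\cdot))^{-1}(\cdot)\big)$ under left multiplication by an invertible matrix, exactly the computation behind the cocycle relations for $g^*_{\bI,\bJ}$) is correct and is precisely what the paper's commutativity computation uses.

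However, there is a genuine gap in how you set up the local pieces. You define $a_{\bI}$ only on the locus in $U_{\bI}\times GL(\bm)$ where $M_{\bI}\big([\XX]_{\bI}Q\big)$ is invertible, i.e.\ where the image point lies again in the \emph{same} chart $U_{\bI}$. The union of these domains over all $\bk$-indices $\bI$ does not cover $G_{\bk}(\bm)(T)\times GL(\bm)(T)$, so gluing your family $\{a_{\bI}\}$ produces a morphism defined only on a proper open subobject, not a global action. Already for $G_1(2)\cong\mathbb P^1$ this fails at honest points: take $\XX$ the line spanned by $e_1$ (which lies only in the chart $I=\{1\}$) and $Q=\left(\begin{smallmatrix}0&1\\1&0\end{smallmatrix}\right)$, so that $\XX Q$ is the line spanned by $e_2$ (which lies only in the chart $I=\{2\}$); no single chart contains both the source and its image, so $(\XX,Q)$ is in the domain of none of your $a_{\bI}$. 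The paper avoids this by decoupling the source chart from the target chart: it refines the cover to
\begin{equation*}
U_{\bI}^{\bJ}(T):=\Big\{\psi\in U_{\bI}(T)\ \Big|\ M_{\bJ}\big([\psi]_{\bI}[\PP]\big)\ \text{is invertible}\Big\},
\end{equation*}
which \emph{does} cover $G_{\bk}(\bm)(T)$ for fixed $\PP$, and defines $\textbf{A}_{\bI}^{\bJ}:U_{\bI}^{\bJ}(T)\to U_{\bJ}(T)$ by $\psi\mapsto D_{\bJ}\big((M_{\bJ}([\psi]_{\bI}[\PP]))^{-1}[\psi]_{\bI}[\PP]\big)$. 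The compatibility condition then becomes a four-index identity $(g_{\bL,\bJ})_{_T}\circ\textbf{A}_{\bI}^{\bJ}=\textbf{A}_{\bQ}^{\bL}\circ(g_{\bQ,\bI})_{_T}$ on $U_{\bI}^{\bJ}(T)\cap U_{\bQ}^{\bL}(T)$, proved by exactly the left-invariance argument you describe; so your algebraic core survives, but the indexing of the local data must be corrected before the gluing step is legitimate.
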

\begin{proof}
First, we have to define a morphism 
$a:G_{\bk}(\bm)\times GL(\bm) \rightarrow G_{\bk}(\bm)$. For this, by Yoneda lemma, it is sufficient to define $a_{_T}$: $$a_{_T}:G_{\bk}(\bm)(T)\times GL(\bm)(T) \rightarrow G_{\bk}(\bm)(T).$$
for each $\Zn-$supermanifold $T$ or equivalently define 
$$(a_{_T})^\PP: G_{\bk}(\bm)(T) \rightarrow G_{\bk}(\bm)(T).$$
where $\PP$ is a fixed arbitrary element in $GL(\bm)(T)$. For brevity, we denote $(a_{_T})^\PP$ by $\textbf{A}$.
One may consider $GL(\bm)(T)$, as the set of $\bm\times \bm$ invertible $\Zn-$supermatrices with entries in $\mathcal{O}(T)$, but there is not such a description for $G_{\bk}(\bm)(T)$, because it is not a $\Zn-$superdomain. 
We know each $\Zn-$supergrassmanian is constructed by gluing $\Zn-$superdomains (c.f. section 2), so one may define the actions of $GL(\bm)$ on $\Zn-$superdomains $(\overline{U}_{\bI},\mathcal{O}_{\bI})$ and then shows that these actions glued to construct $a_{_T}$. 

For defining $\textbf{A}$, it is needed to refine the covering $\{U_{\bI}(T)\}_{\bI}$. Set
$$U_{\bI}^{\bJ}(T):=\Big\{\psi\in U_{\bI}(T)\quad|\quad M_{\bJ}\Big([\psi]_{\bI}[\PP]\Big) \qquad \text{is invertible} \Big\},$$
where $[\PP]$ is the matrix form of the fixed arbitrary element $\PP$ in $GL(\bm)(T)$, see \cite{carmelibook} and \cite{vsv}. One can show that $\{U_{\bI}^{\bJ}(T)\}_{\bI,\bJ}$ is a covering for $G_{\bk}(\bm)(T)$ and $\textbf{A}  \Big(U_{\bI}^{\bJ}(T)\Big)\subseteq U_{\bJ}(T).$
Now consider all maps 
\begin{align*} 
\textbf{A}_{\bI}^{\bJ}: & U_{\bI}^{\bJ}(T)\rightarrow U_{\bJ}(T)\\
& \quad \psi \mapsto  D_{\bJ}\bigg(\Big(M_{\bJ}([\psi]_{\bI}[\PP])\Big)^{-1}[\psi]_{\bI}[\PP]\bigg)\end{align*}
where, $[\psi]_{\bI}$ is as above. We have to show that these maps may be glued to construct a global map on $G_{\bk}(\bm)(T)$. For this, it is sufficient to show that
the following diagram commutes:
\begin{center}
\begin{displaymath}\label{Diag}
\xymatrix{ & U_{\bI}^{\bJ}(T)\cap U_{\bQ}^{\bL}(T) \ar[rd]^{(g_{{\bQ},{\bI}})_{_T}}\ar[ld]_{\textbf{A}_{\bI}^{\bJ}} & \\ U_{\bJ}(T)\cap U_{\bL}(T)\ar[dr]_{(g_{{\bJ},{\bL}})_{_T}} &  & U_{\bI}^{\bJ}(T)\cap U_{\bQ}^{\bL}(T) \ar[ld]^{\textbf{A}_{\bQ}^{\bL}}\\
& U_{\bJ}(T)\cap U_{\bL}(T) & }\quad 
\end{displaymath}
\end{center}
where $\big(g_{\bI, \bJ}\big)_{_T}$
is the induced map from $g_{\bI, \bJ}$ on $T$-points. The following proposition is used to show commutativity of the above diagram. 
\end{proof}
\begin{proposition}
The last diagram commutes.
\end{proposition}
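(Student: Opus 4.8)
The plan is to reduce the commutativity of the diagram to a single identity between $\Zn$-supermatrices with entries in $\mathcal{O}(T)$, and then to verify that identity by an elementary computation exploiting the fact that the column-selection operators $M_{\bullet}$ commute with left multiplication. Fix $\psi\in U_{\bI}^{\bJ}(T)\cap U_{\bQ}^{\bL}(T)$ and abbreviate $C:=[\psi]_{\bI}[\PP]$, a $\bk\times\bm$ supermatrix over $\mathcal{O}(T)$; by definition of the refined cover, $M_{\bJ}C$ is invertible since $\psi\in U_{\bI}^{\bJ}(T)$. First I would rewrite each of the four arrows purely in terms of matrices: by the Lemma the transition $\big(g_{\bQ,\bI}\big)_{_T}$ sends the $\bI$-chart matrix $[\psi]_{\bI}$ to the $\bQ$-chart matrix $[\psi]_{\bQ}=(M_{\bQ}[\psi]_{\bI})^{-1}[\psi]_{\bI}$, and on the other side it turns the $\bJ$-chart output of $\textbf{A}_{\bI}^{\bJ}$ into its $\bL$-chart form by left-multiplying by $(M_{\bL}\,\cdot\,)^{-1}$ and applying $D_{\bL}$, while $\textbf{A}_{\bI}^{\bJ}$ and $\textbf{A}_{\bQ}^{\bL}$ are given directly by their defining formulas. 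Thus the two composites become explicit matrix expressions and the claim is the equality of the two resulting $\bL$-chart matrices.

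The key computational fact is that for any $\bk$-index $\bL$ the operator $M_{\bL}$ is right multiplication by the $\bm\times\bk$ column-selection matrix, so that $M_{\bL}(PX)=P\,M_{\bL}(X)$ for every invertible $\bk\times\bk$ matrix $P$; equivalently, left multiplication commutes with picking out the $\bL$-columns. Using this, along the path $(g_{\bJ,\bL})_{_T}\circ\textbf{A}_{\bI}^{\bJ}$ I would set $N:=M_{\bJ}C$, so that $\textbf{A}_{\bI}^{\bJ}(\psi)$ has full $\bJ$-chart matrix $N^{-1}C$; applying the transition to the $\bL$-chart gives
\[
\big(M_{\bL}(N^{-1}C)\big)^{-1}N^{-1}C=\big(N^{-1}M_{\bL}C\big)^{-1}N^{-1}C=(M_{\bL}C)^{-1}N\,N^{-1}C=(M_{\bL}C)^{-1}C .
\]
Along the other path, $(g_{\bQ,\bI})_{_T}(\psi)$ has full $\bQ$-chart matrix $P[\psi]_{\bI}$ with $P:=(M_{\bQ}[\psi]_{\bI})^{-1}$ invertible, so $[\psi]_{\bQ}[\PP]=P\,[\psi]_{\bI}[\PP]=PC$, and $\textbf{A}_{\bQ}^{\bL}$ produces
\[
\big(M_{\bL}(PC)\big)^{-1}PC=\big(P\,M_{\bL}C\big)^{-1}PC=(M_{\bL}C)^{-1}P^{-1}PC=(M_{\bL}C)^{-1}C .
\]
Both paths therefore yield the single matrix $D_{\bL}\big((M_{\bL}C)^{-1}C\big)$ after deleting the $\bL$-columns, which proves commutativity.

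Finally I would record the bookkeeping needed to make this rigorous: that $M_{\bQ}[\psi]_{\bI}$ and $M_{\bL}C$ are invertible on the overlap (the former because $\psi$ lies in the $\bQ$-chart, the latter because it differs from $M_{\bL}([\psi]_{\bQ}[\PP])=P\,M_{\bL}C$ by the invertible factor $P$, and that matrix is invertible exactly by the condition defining $U_{\bQ}^{\bL}(T)$), so every inverse above is legitimate, and that all products are the ordinary weight-zero supermatrix products, so the $\Zn$-commutativity signs never intervene and associativity is all that the cancellations $N\,N^{-1}$ and $P^{-1}P$ require. The main obstacle is the second point: one must verify that the selection identity $M_{\bL}(PX)=P\,M_{\bL}(X)$ genuinely holds in the $\Zn$-graded setting, i.e. that extracting the $\bL$-columns is literally right multiplication by a constant selection matrix and hence associates correctly with the graded products; once this is in place the whole proposition reduces to the two short cancellations displayed above.
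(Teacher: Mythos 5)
Your proof is correct and follows essentially the same route as the paper: both reduce the commutativity to a matrix computation and cancel via the identity $M_{\bL}(PX)=P\,M_{\bL}(X)$, showing that both composites yield $D_{\bL}\big((M_{\bL}([\psi]_{\bI}[\PP]))^{-1}[\psi]_{\bI}[\PP]\big)$. Your version merely packages the same cancellations more cleanly (via $C$, $N$, $P$) and makes explicit the invertibility bookkeeping that the paper leaves implicit.
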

\begin{proof}
We have to show that
\begin{equation}\label{glueaction}
(g_{{\bL},{\bJ}})_{_T}\circ \textbf{A}_{\bI}^{\bJ}=\textbf{A}_{\bQ}^
{\bL} \circ (g_{{\bQ},{\bI}})_{_T}.
\end{equation}
for arbitrary $\bk$-indices $\bI, \bJ, \bQ, \bL.$ 
Let $\psi\in U_{\bI}^{\bJ}(T)\cap U_{\bQ}^{\bL}(T)$ be an arbitrary element. One has $\psi\in U_{\bI}^{\bJ}(T)$, so 
\begin{align*} D_{\bJ}\bigg(\Big(M_{\bJ}([\psi]_{\bI}[\PP])\Big)^{-1}[\psi]_{\bI}[\PP]\bigg)&\in U_{\bJ}(T),\\
(g_{{\bL},{\bJ}})_{_T}\Bigg(D_{\bJ}\Big(\big(M_{\bJ}([\psi]_{\bI}[\PP])\big)^{-1}[\psi]_{\bI}[\PP]\Big)\Bigg)&\in U_{\bL}(T).
\end{align*}
From left side of (\ref{glueaction}), we have:
\begin{align*}
(g_{{\bL},{\bJ}})_{_T}&\circ \textbf{A}_{\bI}^{\bJ}(\psi)\\
&=(g_{{\bL},{\bJ}})_{_T}\Bigg(D_{\bJ}\Big(\big(M_{\bJ}([\psi]_{\bI}[\PP])\big)^{-1}[\psi]_{\bI}[\PP]\Big)\Bigg)\\
& =D_{\bL}\Bigg(\bigg(M_{\bL}\Big((M_{\bJ}([\psi]_{\bI}[\PP]))^{-1}[\psi]_{\bI}\PP\Big)\bigg)^{-1}(M_{\bJ}([\psi]_{\bI}\PP))^{-1}[\psi]_{\bI}[\PP]\Bigg)\\
& =D_{\bL}\Bigg(\Big((M_{\bJ}([\psi]_{\bI}[\PP]))^{-1}(M_{\bL}([\psi]_{\bI}[\PP]))\Big)^{-1}(M_{\bJ}([\psi]_{\bI}[\PP]))^{-1}[\psi]_{\bI}[\PP]\Bigg)\\
&
=D_{\bL}\Bigg((M_{\bL}([\psi]_{\bI}[\PP]))^{-1}M_{\bJ}([\psi]_{\bI}[\PP])(M_{\bJ}([\psi]_{\bI}[\PP]))^{-1}[\psi]_{\bI}[\PP]\Bigg)\\
&
=D_{\bL}\Bigg(\Big(M_{\bL}([\psi]_{\bI}\PP)\Big)^{-1}[\psi]_{\bI}\PP\Bigg).
\end{align*}‎
For right side of equation (\ref{glueaction}), we have
\begin{align*}
\textbf{A}_{\bQ}^{\bL}&\circ(g_{{\bQ},{\bI}})_{_T}(\psi)\\
&=\textbf{A}_{\bQ}^{\bL}\Bigg(D_{\bQ}\bigg((M_{\bQ}[\psi]_{\bI})^{-1}[\psi]_{\bI}\bigg)\Bigg)\\
&  =D_{\bL}\Bigg(\Big[M_{\bL}\bigg((M_{\bQ}[\psi]_{\bI})^{-1}[\psi]_{\bI}[\PP]\bigg)\Big]^{-1}(M_{\bQ}[\psi]_{\bI})^{-1}[\psi]_{\bI}[\PP]\Bigg)\\
&
=D_{\bL}\Bigg(\Big[(M_{\bQ}[\psi]_{\bI})^{-1}M_{\bL}([\psi]_{\bI}[\PP])\Big]^{-1}(M_{\bQ}[\psi]_{\bI})^{-1}[\psi]_{\bI}[\PP]\Bigg)\\
&
=D_{\bL}\Bigg(\Big(M_{\bL}([\psi]_{\bI}[\PP])\Big)^{-1}(M_{\bQ}[\psi]_{\bI})(M_{\bQ}[\psi]_{\bI})^{-1}[\psi]_{\bI}[\PP]\Bigg)\\
&
=D_{\bL}\Bigg(\Big(M_{\bL}([\psi]_{\bI}\PP)\Big)^{-1}[\psi]_{\bI}\PP\Bigg).
\end{align*}
This shows that the above diagram commutes.
\end{proof}
Therefore $GL(\bm)$ acts on $G_{\bk}(\bm)$ with action $a$.
Now it is needed to show that this action is transitive. 
\begin{theorem}
$GL(\bm)$ acts on $G_{\bk}(\bm)$ transitively.
\end{theorem}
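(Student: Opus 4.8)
The plan is to invoke Proposition \ref{Transitive}, which reduces transitivity of the action $a$ to a single purely algebraic surjectivity statement: it suffices to exhibit one base point $p\in\overline{G_{\bk}(\bm)}$ such that the induced map $(a_p)_{\mathbb{R}^{\br'}}:GL(\bm)(\mathbb{R}^{\br'})\to G_{\bk}(\bm)(\mathbb{R}^{\br'})$ is surjective, where $\br$ is the dimension of $GL(\bm)$ and $\br'=(0,r_1,\ldots,r_\qn)$ kills the even (topological) part. Since $\mathbb{R}^{\br'}$ has a one-point reduced space, a $T$-point with $T=\mathbb{R}^{\br'}$ is a $\Zn$-supermatrix whose purely even block entries are \emph{constants} and whose remaining entries are nilpotent; this is exactly the setting in which explicit matrix manipulation becomes tractable.

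First I would fix a convenient base index, say $\bI_0=\big(\{1,\dots,k_0\},\{1,\dots,k_1\},\ldots,\{1,\dots,k_\qn\}\big)$, and take $p\in\overline{U}_{\bI_0}$ to be the origin of that chart, so that $[p]_{\bI_0}$ is the standard $\bk\times\bm$ matrix $[\,\id\mid 0\,]$ (identity on the columns indexed by $\bI_0$, zero elsewhere). By Remark \ref{morph-matrix} and the formula for $\textbf{A}_{\bI_0}^{\bJ}$ established in the previous theorem, the action of a group element $\PP\in GL(\bm)(T)$ on the $T$-point $\hat p_{_T}$ is computed by the row-reduction formula $D_{\bJ}\big((M_{\bJ}([p]_{\bI_0}[\PP]))^{-1}[p]_{\bI_0}[\PP]\big)$; here $[p]_{\bI_0}[\PP]$ simply selects the $\bk$ rows of $[\PP]$ corresponding to $\bI_0$. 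Thus $(a_p)_{_T}$ sends $\PP$ to the row span of a $\bk\times\bm$ submatrix of $[\PP]$, which is precisely the geometric picture of $GL$ acting on planes.

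Next I would prove surjectivity of $(a_p)_{\mathbb{R}^{\br'}}$. Given an arbitrary $T$-point $\XX\in G_{\bk}(\bm)(\mathbb{R}^{\br'})$, it lies in some chart $U_{\bJ}(T)$ and is represented by a matrix $[\XX]_{\bJ}$ which is $\id$ on the $\bJ$-columns and arbitrary (admissible-degree) entries elsewhere; I must produce an invertible $\bm\times\bm$ supermatrix $\PP$ of weight zero whose $\bI_0$-rows, after the normalization $D_{\bJ}\big((M_{\bJ}(\cdot))^{-1}(\cdot)\big)$, reproduce $[\XX]_{\bJ}$. The natural choice is to build $\PP$ block-diagonally: place $[\XX]_{\bJ}$ (suitably transposed/embedded) into the $\bI_0$-rows and complete it to an invertible matrix by filling the complementary rows with a standard invertible pattern, exactly as one completes a $k$-frame to a basis in the classical Grassmannian. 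One then checks that this $\PP$ indeed has weight zero — this is where the $\Zn$-degree bookkeeping enters, since the off-diagonal entries of $[\XX]_{\bJ}$ carry prescribed degrees $\gamma_i+\gamma_j$ and must be slotted into blocks $B_{ku}$ of matching degree $\gamma_k+\gamma_u$ — and that $\textbf{A}_{\bI_0}^{\bJ}$ applied to $\PP\cdot\hat p$ returns $\XX$.

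The main obstacle I anticipate is not the surjectivity argument in a \emph{single} chart, which is essentially the classical completion-of-a-frame computation, but rather verifying that the constructed $\PP$ is genuinely an element of $GL(\bm)$, i.e.\ invertible over $\mathcal{O}(\mathbb{R}^{\br'})$ and of pure weight zero. Invertibility reduces to invertibility of the reduced (even, constant) part together with the fact that nilpotent corrections preserve invertibility, while the weight-zero condition is a matter of confirming that every generator $g_j=\XX(\xi_j)$ of degree $\gamma_i$ lands in a block of the correct total degree. I would handle both points by unwinding the degree conventions set up in the definition of $A_{\bI}$ and the ordering \eqref{ordergen}, reducing the whole claim to the classical real Grassmannian statement in the even sector plus a mechanical degree check in the odd sectors. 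With surjectivity of $(a_p)_{\mathbb{R}^{\br'}}$ in hand, Proposition \ref{Transitive} immediately yields transitivity of $a$.
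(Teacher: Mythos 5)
Your proposal is correct and follows essentially the same route as the paper: reduce via Proposition \ref{Transitive} to surjectivity of $(a_p)_{\mathbb{R}^{\br^\prime}}$, then exhibit a preimage of an arbitrary point of $U_{\bJ}(\mathbb{R}^{\br^\prime})$ by explicit block-matrix linear algebra resting on the classical transitivity of $GL(m_i)$ on $G_{k_i}(m_i)$. The only (cosmetic) difference is that you fix the base point $p$ to be the origin of a standard chart and complete $[\XX]_{\bJ}$ to an invertible $\bm\times\bm$ matrix, whereas the paper keeps a generic $p\in\overline{U}_{\bI}$ and solves the block equations $\bar p_i H_{ij}=W_{ij}$ for the entries of $V$; your version actually makes the invertibility and weight-zero checks, which the paper leaves implicit, easier to carry out.
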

\begin{proof}
By proposition \ref{Transitive}, it is sufficient to show that the map $$(a_p)_{\mathbb{R}^{\br^\prime}}:GL(\bm)(\mathbb{R}^{\br^\prime}) \rightarrow G_{\bk}(\bm)(\mathbb{R}^{\br^\prime}),$$ is surjective,
where $\br=(r_0,r_1,\ldots,r_{_\qn})$ is dimension of $GL(\bm)$ and $\br^\prime=(0,r_1,\ldots,r_{_\qn}).$
Let $$p=(p_0,p_1,\ldots,p_\qn)\in\overline{U}_{\bI}\subset 
G_{k_0}(m_0)\times G_{k_1}(m_1)\times \ldots \times G_{k_\qn(m_\qn)}$$
be an element and $\bar{p}_0,\bar{p}_1,\ldots, \bar{p}_{_\qn}$ be the matrices corresponding to subspaces $p_0,p_1,\ldots,p_\qn$ respectively. As an element of $G_{\bk}(\bm)(T)$, one may represent $\hat{p}_{_T}$, as follows
$$\hat{p}_{_T}=\left[\begin{array}{c|c|c|c} 
\overline{p}_0 & 0 & \ldots & 0\\
\hline 
0 & \overline{p}_1 & \ldots & 0\\
\hline
0 & 0 & \ldots & 0\\
\hline
0 & 0 & \ldots & \overline{p}_{_\qn}
\end{array}
\right]$$
where $T$ is an arbitrary $\Zn-$supermanifold.
For surjectivity, let 
$$W=\left[\begin{array}{c|c|c|c} 
W_{00} & W_{01} & \ldots & W_{0\qn}\\
\hline
W_{10} & W_{11} & \ldots & W_{1\qn}\\
\hline 
\ldots & \ldots & \ldots & \ldots\\
\hline
W_{\qn0} & W_{\qn1} & \ldots & W_{\qn\qn}
\end{array}
\right]\in U_{\bJ}(\mathbb{R}^{\br^\prime}),$$
 be an arbitrary element.
According to \eqref{apag}, we have to show that there exists an element $V\in GL(\bm)(\mathbb{R}^{\br^\prime})$ such that $\hat{p}_{_T}V=W$. Since 
 the Lie group $GL(m_i)$ acts on manifold $G_{k_i}(m_i)$ transitively, then there exists an invertible matrix $H_{ii}\in GL(m_i)$ such that $\bar{p}_iH_{ii}=W_{ii}$. In addition, the equations $\bar{p}_iZ=W_{ij}$ have solutions since $rank(\bar{p}_i)=k_i$. Let $H_{ij}$
be solutions of these equations respectively. Clearly, One can see $$V=\left[\begin{array}{c|c|c|c}
H_{00} & H_{01} & \ldots & H_{0\qn}\\
\hline H_{10} & H_{11} & \ldots & H_{1\qn}\\
\hline \ldots & \ldots & \ldots & \ldots\\
\hline H_{\qn0} & H_{\qn1} & \ldots & H_{\qn\qn}
\end{array}
\right]_{\bm\times \bm}$$
satisfy in  the equation $\hat{p}_{_T}V=W$. So $(a_p)_{\mathbb{R}^{\br^\prime}}$ is surjective.
By Proposition \ref{Transitive}, $GL(\bm)$ acts on $G_{\bk}(\bm)$ transitively.
\end{proof}
Thus according to Proposition \ref{equivariant}, $G_{\bk}(\bm)$ is a homogeneous $\Zn-$superspace.

\newpage
\providecommand{\bysame}{\leavevmode\hbox to3em{\hrulefill}\thinspace}
\providecommand{\MR}{\relax\ifhmode\unskip\space\fi MR }

\providecommand{\href}[2]{#2}

\end{document}